\newtheorem{thm}{Theorem}[section]
\newtheorem{lem}[thm]{Lemma}
\newtheorem{cor}[thm]{Corollary}
\newtheorem{prop}[thm]{Proposition}
\newtheorem{ex}[thm]{Example}
\theoremstyle{definition}
\newtheorem{rmk}[thm]{Remark}
\DeclareMathOperator{\spt}{spt}
\DeclareMathOperator{\Id}{Id}
\renewcommand{\epsilon}{\varepsilon}
\newcommand{\R}{\mathbb{R}}
\newcommand{\Paren}[1]{\left(#1\right)}
\newcommand{\Curly}[1]{\left\{#1\right\}}
\newcommand{\Leb}[1]{\lvert {#1}\rvert_{\mathcal{L}}}
\newcommand{\norm}[1]{\lvert #1\rvert}
\newcommand{\TwoCoupling}[1][]{\gamma_{#1}}
\newcommand{\BarTwoCoupling}[1][]{{\bar \gamma}_{#1}}
\newcommand{\MMCoupling}[1][]{\sigma_{#1}}
\newcommand{\BarMMCoupling}[1][]{{\bar \sigma}_{#1}}
\def\Avg{A}
\newcommand{\Dominating}[1][]{\mu_{#1}}
\newcommand{\DominatingDens}[1][]{g_{#1}}
\newcommand{\DominatingCommon}{\Dominating[\wedge]}
\newcommand{\DominatingCommonDens}{\DominatingDens[\wedge]}
\newcommand{\Bary}[1][]{\nu^{#1}}
\newcommand{\BaryDens}[1][]{f^{#1}}
\newcommand{\BaryCommon}{\Bary[\wedge]}
\newcommand{\BaryCommonDens}{\BaryDens[\wedge]}
\def\BaryCommonMass{m^\wedge}
\newcommand{\BaryBar}[1][]{\overline{\nu}^{#1}}
\newcommand{\Paired}[1][]{{\mu}^{#1}_{\leq}}
\newcommand{\PairedDens}[1][]{{g}^{#1}_{\leq}}
\newcommand{\PairedCommon}{\Paired[\wedge]}
\newcommand{\PairedCommonDens}{\PairedDens[\wedge]}
\newcommand{\Coupling}[1][]{{\gamma}^{#1}}
\newcommand{\BarCoupling}[1][]{{\bar \gamma}^{#1}}
\def\CommonCoupling{{\Coupling[\wedge]}}
\newcommand{\PairedMulti}[2][]{{\mu}^{#1}_{#2\leq }}
\newcommand{\PairedMultiDens}[2][]{{g}^{#1}_{#2\leq}}
\newcommand{\WassSq}[3][2]{\mathcal{MK}^2_{#1}\Paren{#2, #3}}
\newcommand{\PartialWassSq}[3][m]{\mathcal{MK}^2_{2, #1}\Paren{#2, #3}}
\def\Func{F}
\newcommand{\Functional}[2][]{\Func_{#1}(#2)}
\newcommand{\Measures}[1][m]{\mathcal{P}^{#1}}
\newcommand{\Constraint}[1][m]{\mathcal{P}_{ac}^{#1}}
\DeclareMathOperator{\optimal}{Opt}
\newcommand{\Opt}[3][]{\optimal_{#1}\Paren{#2, #3}}
\newcommand{\OptMulti}[2][]{\optimal_{#1}\Paren{#2}}
\newcommand{\Cost}[2][]{\mathcal{C}_{#1}{\Paren{#2}}}
\def\cost{c}
\newcommand{\Couples}[2]{\Pi\Paren{#1, #2}}
\newcommand{\PartialCouples}[2]{\Pi_{\leq}\Paren{#1, #2}}
\newcommand{\MultiCouples}[1]{\Pi\Paren{#1}}
\newcommand{\PartialMultiCouples}[1]{\Pi_{\leq}\Paren{#1}}
\newcommand{\Proj}[1]{\pi_{#1}}
\newcommand{\DiagMap}{\Paren{\Id\times \Id}}
\newcommand{\Mapping}[1][]{T^{#1}}
\newcommand{\MultiMapping}[2][]{T^{#1}_{#2}}
\newcommand{\MappingInv}[1][]{\Paren{T^{#1}}^{-1}}
\newcommand{\DualMapping}[1][]{S^{#1}}
\newcommand{\MultiDualMapping}[2][]{S^{#1}_{#2}}
\newcommand{\MultiDualMappingInv}[2][]{\Paren{S^{#1}_{#2}}^{-1}}
\newcommand{\MappingCoupling}[1][]{\Paren{\Id\times \Mapping[#1]}}
\newcommand{\RevMappingCoupling}[1][]{\Paren{\DualMapping[#1]\times\Id}}
\newcommand{\MultiMappingCoupling}[2][]{\Paren{\Id\times \MultiMapping[#1]{#2}}}
\newcommand{\RevMultiMappingCoupling}[2][]{\Paren{\MultiDualMapping[#1]{#2}\times\Id}}
\def\StaySet{I}
\newcommand{\mass}[1]{\mathcal{M}\Paren{#1}}
\def\m{m}
\def\mbar{\bar m}
\def\MinSmaller{\mathcal{A}_{\{\DominatingCommonDens<\BaryDens[i]\}}}
\def\MinBigger{\mathcal{A}_{\{\DominatingCommonDens>\BaryDens[i]\}}}
\def\Number{N}
\def\Indices{I}
\def\Indicator{\mathbbm{1}}
\begin{document}
\author{Jun Kitagawa}
\address{Department of Mathematics, University of Toronto,
Toronto, Ontario, Canada M5S 2E4}
\email{kitagawa@math.toronto.edu}

\author{Brendan Pass}
\address{Department of Mathematical and Statistical Sciences, 632 CAB, University of Alberta, Edmonton, Alberta, Canada, T6G 2G1}
\email{pass@ualberta.ca}
\title{The multi-marginal optimal partial transport problem}
\subjclass[2010]{35J96}
\thanks{This material is based upon work supported by the National Science Foundation under Grant No. 0932078 000, while both authors were in residence at the Mathematical Sciences Research Institute in Berkeley, California, during
the Fall 2013  program on Optimal Transport: Geometry and Dynamics.  We would like to thank both MSRI and the organizers of the program for their generous  hospitality. In addition, B.P. is pleased to acknowledge the support of a University of Alberta start-up grant and National Sciences and Engineering Research Council of Canada Discovery Grant number 412779-2012.}

\begin{abstract}
We introduce and study a multi-marginal optimal partial transport problem.  Under a natural and sharp condition on the dominating marginals, we establish uniqueness of the optimal plan.   Our strategy of proof establishes and exploits a connection with another novel problem, which we call the Monge-Kantorovich partial barycenter problem (with quadratic cost).  This latter problem has a natural interpretation as a variant of the mines and factories description of optimal transport.  We then turn our attention to various analytic properties of these two problems.  Of  particular interest, we show that monotonicity of the active marginals with respect to the amount $m$ of mass to be transported can fail, a surprising difference from the two marginal case.
\end{abstract}
\maketitle
\section{Introduction}
Throughout this paper, whenever we write ``measure'' it will tacitly be assumed that we are referring to a positive, Borel measure on the relevant space in question. In all but the last section, we also assume a measure $\mu$, when it is defined on $\R^n$, has finite second moment; i.e. that $\int_{\R^n}\norm{x}^2\mu(dx)<\infty$. Also, ``absolutely continuous'', ``a.e.'', ``null set'', and ``zero measure'' without any further qualifiers will always be with respect to the Lebesgue measure. Finally, for any measure $\mu$, we will write
\begin{align*}
\mass{\mu}:=\mu(X),
\end{align*}
where $X$ is the entire space that $\mu$ is defined on.

Recall the classical optimal transport problem with quadratic cost: let $\Dominating$ and $\Bary$ be measures on $\R^n$ satisfying the mass constraint $\mass{\Dominating}=\mass{\Bary}<\infty$, and write $\Couples{\Dominating}{\Bary}$ for the collection of all measures on $\R^n\times\R^n$ whose left and right marginals equal $\Dominating$ and $\Bary$, respectively. Then a solution of the optimal transport problem (with quadratic cost) is a measure $\TwoCoupling\in\Couples{\Dominating}{\Bary}$ achieving the minimum value in
\begin{align}\label{eqn: OT}\tag{OT}
\WassSq{\Dominating}{\Bary}:=\min_{\TwoCoupling'\in\Couples{\Dominating}{\Bary}}\int_{\R^n\times\R^n} \norm{x-y}^2\TwoCoupling'(dx, dy).
\end{align}
We will denote the collection of solutions to~\eqref{eqn: OT} above as $\Opt{\Dominating}{\Bary}$.  Existence of an optimizer is not difficult to show; a famous theorem of Brenier  and McCann implies that if the first measure is absolutely continuous, and both measures have finite second moments, the solution is unique and is in fact concentrated on the graph $\{(x,T(x))\}$ of a function over the first variable \cite{Bre87,Bre91}\cite{McC95}.  This result has been extended to a wide class of other costs (see, for example, \cite{Gan95, Gm96, Caf96}).

We will be concerned here with two natural extensions of~\eqref{eqn: OT} above. The first is the  optimal \emph{partial} transport problem: let $\Dominating$ and $\Bary$ be measures on $\R^n$ each with finite total mass (not necessarily equal), fix any $0\leq m\leq \min{\{\mass{\Dominating},\ \mass{\Bary}\}}$, and write $\PartialCouples{\Dominating}{\Bary}$ for the collection of all measures on $\R^n\times\R^n$ whose left and right marginals are \emph{dominated} by $\Dominating$ and $\Bary$ respectively, that is, $(\Proj{1})_\#\TwoCoupling\Paren{E}\leq \Dominating\Paren{E}$ and $(\Proj{2})_\#\TwoCoupling\Paren{E}\leq \Bary\Paren{E}$ for any measurable set $E$, where $\Proj{j}$ denotes projection onto the $j$th coordinate (for ease of notation we will simply write $\Paired\leq\Dominating$ to indicate that $\Paired$ is dominated by $\Dominating$).
Then a solution of the optimal partial transport problem (again with quadratic cost) is a measure $\TwoCoupling\in\PartialCouples{\Dominating}{\Bary}$ with $\mass{\TwoCoupling}=m$ achieving the minimum value in
\begin{align}\label{eqn: OTm}\tag{$\rm{OT_m}$}
\PartialWassSq{\Dominating}{\Bary}:=\min_{\TwoCoupling'\in\PartialCouples{\Dominating}{\Bary},\  \mass{\TwoCoupling'}=m}\int_{\R^n\times\R^n} \norm{x-y}^2\TwoCoupling'(dx, dy).
\end{align}
We will denote the collection of solutions to~\eqref{eqn: OTm} above as $\Opt[m]{\Dominating}{\Bary}$.  Again, existence of an optimal measure can be established in a straightforward way.  Uniqueness is much more involved; however, when the supports of the two measures are separated by a hyperplane, Caffarelli and McCann established a uniqueness result (in addition to several properties of the minimizer, see~\cite{CM10}). This assumption on the measures was weakened by Figalli in~\cite{Fig10a}; he assumed only that the pointwise minimum of the two measures has total mass not greater than  $m$.  This is easily seen to be a sharp condition for uniqueness;  if it fails, then for any measure $\Paired$ with mass $m$ satisfying both $\Paired \leq \Dominating$ and $\Paired\leq\Bary$, the diagonal coupling $\DiagMap_\#\Paired$ is clearly optimal. In addition, Figalli extended his results to a larger class of cost functions.

On the other hand, one can also consider the \emph{multi-marginal} optimal transport problem: let $\Dominating[j]$ for $j=1,\ldots, \Number$ be measures on $\R^n$ all with equal, finite mass, and write $\MultiCouples{\Dominating[1],\ldots,\Dominating[\Number]}$ for the collection of all measures on $\Paren{\R^n}^\Number$ whose $j$th marginal equals $\Dominating[j]$. Then a solution of the multi-marginal optimal transport problem, with 
cost:
\begin{align*}
\cost(x_1,\ldots, x_\Number):=\sum_{j\neq k}^\Number{\norm{x_j-x_k}^2},
\end{align*}
 (see~\cite{GS98}), is a measure $\MMCoupling\in\MultiCouples{\Dominating[1],\ldots,\Dominating[\Number]}$ achieving the minimum value in
\begin{align}\label{eqn: MM}\tag{MM}
\min_{\MMCoupling'\in\MultiCouples{\Dominating[1],\ldots,\Dominating[\Number]}}\Cost{\MMCoupling'},
\end{align}
where
\begin{align*}
\Cost{\MMCoupling'}:=\int_{\Paren{\R^n}^\Number} \cost(x_1,\ldots, x_\Number)\MMCoupling'(dx_1,\ldots, dx_\Number).
\end{align*}
Once more, existence can be established in a straightforward way.  Assuming the first measure is absolutely continuous, Gangbo and {\'S}wi{\c{e}}ch proved that the optimizer is unique  and, like in the two marginal case, is concentrated on a graph $\{(x_1,T_2(x_1),\ldots,T_\Number(x_1))\}$ over the first variable \cite{GS98},  extending earlier partial results in the same setting in \cite{OR93}\cite{KS94}.   This result has been extended to certain other cost functions \cite{C03, P11, P13a, KP13}; these costs are very special, however, and for a variety of other costs, counterexamples to uniqueness and the graphical structure are known \cite{CN08, P13b, P12}, indicating that these properties depend delicately on the cost function for multi-marginal problems.

In this paper, we combine these two extensions~\eqref{eqn: OTm} and~\eqref{eqn: MM} and consider the \emph{multi-marginal optimal partial transport problem}: let $\Dominating[j]$ for $j=1,\ldots, \Number$ be measures on $\R^n$ all with finite (but not necessarily equal) total mass, fix any $0\leq m\leq \min_{1\leq j\leq \Number}{\{\mass{\Dominating[j]}\}}$, and write $\PartialMultiCouples{\Dominating[1],\ldots,\Dominating[\Number]}$ for the collection of all measures on $\Paren{\R^n}^\Number$ whose $j$th marginal is dominated by $\Dominating[j]$ for each $j$. Then a solution of the multi-marginal optimal partial transport problem  can be defined as a measure $\MMCoupling\in\PartialMultiCouples{\Dominating[1],\ldots,\Dominating[\Number]}$ with $\mass{\MMCoupling}=m$ achieving the minimum value in
\begin{align}\label{eqn: MMm}\tag{$\rm{MM_m}$}
\min_{\MMCoupling'\in\PartialMultiCouples{\Dominating[1],\ldots,\Dominating[\Number]},\ \mass{\MMCoupling'}=m}\Cost{\MMCoupling'}.
\end{align}
Analogously to the above, we will denote the collection of solutions to~\eqref{eqn: MMm}  as $\OptMulti[m]{\Dominating[1],\ldots,\Dominating[\Number]}$. In informal exposition, we will sometimes refer to any marginal of a minimizer in either~\eqref{eqn: OTm} or~\eqref{eqn: MMm} as an ``active submeasure.''

As in~\eqref{eqn: OTm}, existence of a minimizer in~\eqref{eqn: MMm} is not difficult to see; the first issue one encounters is that of uniqueness, which will be the focus of this paper. Our main goal is to identify conditions under which the multi-marginal problem~\eqref{eqn: MMm} admits a unique solution; it turns out that a condition analogous to the one given by Figalli in~\cite{Fig10a} is sufficient, see Theorem~\ref{thm: partial barycenter is unique} below.

Our approach here  involves the analysis of another problem, which turns out to be essentially equivalent to~\eqref{eqn: MMm} and which we call the \emph{(Monge-Kantorovich) partial barycenter problem}.  This is a natural extension of the usual (Monge-Kantorovich) barycenter problem, which is, given measures $\Dominating[1],\ldots, \Dominating[\Number]$, all with mass $\m$, to find a minimizer of
\begin{align}\label{eqn: BC}\tag{BC}
\min_{\Bary\in\Measures}\sum_{j=1}^\Number\WassSq{\Dominating[j]}{\Bary},
\end{align}
where 
\begin{align*}
 \Measures:=\{\text{all measures }\Bary\mid \mass{\Bary}=\m,\ \int_{\R^n}\norm{x}^2\Bary(dx)<\infty\}.
\end{align*}
This problem was introduced by Agueh and Carlier, who showed it is essentially equivalent to \eqref{eqn: MM} (see~\cite{AC11}). 

We introduce the appropriate analogue, the \emph{partial barycenter}, as a minimizer in
\begin{align}\label{eqn: BCm}\tag{$\rm{BC_m}$}
\min_{\Bary\in\Measures}\Functional[\m]{\Bary, \Dominating[1],\ldots, \Dominating[\Number]},
\end{align}
where $m$ and the $\Dominating[j]$ are as in \eqref{eqn: MMm} and,
\begin{align*}
\Functional[\m]{\Bary, \Dominating[1],\ldots, \Dominating[\Number]}:=\sum_{j=1}^\Number\PartialWassSq{\Dominating[j]}{\Bary}.
\end{align*}
When the collection of measures $\Dominating[1],\ldots, \Dominating[\Number]$ and the mass constraint $\m$ is clear, we will suppress them and simply write $\Functional{\Bary}$ in place of $\Functional[\m]{\Bary, \Dominating[1],\ldots, \Dominating[\Number]}$. Also, we may sometimes refer to the submeasures of $\Dominating[j]$ that are actually coupled to a minimizer of~\eqref{eqn: BCm} as ``active submeasures'' as well.

We will first show there is a connection between the problems~\eqref{eqn: MMm} and~\eqref{eqn: BCm} (which is analogous to the relationship between~\eqref{eqn: MM} and~\eqref{eqn: BC} in \cite{AC11}), expressed by the following theorem:

\begin{prop}[Equivalence of {\eqref{eqn: BCm}} and {\eqref{eqn: MMm}}]\label{prop: BCm=MMm}
 For $1\leq j\leq \Number$, fix absolutely continuous measures $\Dominating[j]$, and some $0<\m\leq \min_{1\leq j\leq \Number}{\mass{\Dominating[j]}}$.

Then for any optimal measure $\sigma$ in~\eqref{eqn: MMm}, $\Avg_{\#}\sigma$ is optimal in~\eqref{eqn: BCm}, where 
\begin{align}\label{eqn: average}
\Avg(x_1,\ldots, x_\Number) := \frac{1}{\Number}\sum_{j=1}^\Number x_j.
\end{align}
  Conversely, for any minimizer $\Bary$ in~\eqref{eqn: BCm}, the measure $\Paren{\MultiDualMapping[\Bary]{1},\ldots, \MultiDualMapping[\Bary]{\Number}}_{\#}\Bary$ is optimal in~\eqref{eqn: MMm}, where $\MultiDualMapping[\Bary]{j}$ is the optimal mapping such that $\RevMultiMappingCoupling[\Bary]{j}_\#\Bary\in\Opt[m]{\Dominating[j]}{\Bary}$ for each $1\leq j\leq \Number$.
 
 Furthermore, the minimizer of \eqref{eqn: MMm} is unique if and only the minimizer of \eqref{eqn: BCm} is unique.
\end{prop}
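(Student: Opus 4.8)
The plan is to prove the two correspondences in the proposition are essentially inverse bijections between the solution sets of \eqref{eqn: MMm} and \eqref{eqn: BCm}, and then read off the uniqueness equivalence. So first I would establish, as the core of the proof, the two ``push-forward'' claims: (i) if $\sigma$ solves \eqref{eqn: MMm} then $\Avg_\#\sigma$ solves \eqref{eqn: BCm}, and (ii) if $\Bary$ solves \eqref{eqn: BCm} then the measure built from the optimal maps $\MultiDualMapping[\Bary]{j}$ solves \eqref{eqn: MMm}. The key computational input is the elementary identity
\begin{align*}
\cost(x_1,\ldots,x_\Number) = \sum_{j\neq k}\norm{x_j-x_k}^2 = \Number\sum_{j=1}^\Number\norm[\Big]{x_j - \tfrac1\Number\textstyle\sum_k x_k}^2 = \Number\sum_{j=1}^\Number\norm{x_j - \Avg(x_1,\ldots,x_\Number)}^2,
\end{align*}
which rewrites the multi-marginal cost as (a constant multiple of) a sum of quadratic costs against the barycenter point. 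This lets me relate $\Cost{\sigma}$ to $\Number\,\Functional{\Avg_\#\sigma}$ up to the slack coming from the fact that the marginals of $\sigma$ need only be \emph{dominated} by the $\Dominating[j]$, and the marginals of the optimal partial plans realizing $\PartialWassSq{\Dominating[j]}{\nu}$ need only be dominated as well.

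**Second**, I would run the two halves of the argument as matching inequalities. For the forward direction: given optimal $\sigma$ for \eqref{eqn: MMm} with $\mass{\sigma}=m$, set $\nu := \Avg_\#\sigma$, which has mass $m$ and finite second moment, so $\nu\in\Measures$. For each $j$, the measure $(\Proj{j},\Avg)_\#\sigma$ is a partial coupling of $(\Proj{j})_\#\sigma \le \Dominating[j]$ with $\nu$ of mass $m$, hence competes in \eqref{eqn: OTm}, giving $\PartialWassSq{\Dominating[j]}{\nu}\le \int\norm{x_j-\Avg}^2\,d\sigma$; summing over $j$ and using the cost identity yields $\Number\,\Functional{\nu}\le \Cost{\sigma}$. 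Conversely, given any competitor $\nu'$ for \eqref{eqn: BCm} with optimal partial plans $\MultiCoupling{j}'$ for each $\PartialWassSq{\Dominating[j]}{\nu'}$, I glue them along their common $\nu'$-marginal — here I must be slightly careful since the $\nu'$-marginal of $\MultiCoupling{j}'$ is only \emph{dominated} by $\nu'$, so I first shrink to a common sub-barycenter (take a measure $\bar\nu \le \nu'$ dominated by all the right marginals, with the right mass, via a pointwise-minimum / disintegration argument) and glue the plans over $\bar\nu$ by disintegration to produce $\sigma'\in\PartialMultiCouples{\Dominating[1],\ldots,\Dominating[\Number]}$ with $\mass{\sigma'}=m$ and $\Cost{\sigma'}\le\Number\,\Functional{\nu'}$; optimizing over $\nu'$ gives the reverse inequality. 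Putting both together, $\min\eqref{eqn: MMm} = \Number\min\eqref{eqn: BCm}$, and tracing equality cases shows $\Avg$ sends solutions to solutions and the map-construction sends solutions to solutions. The absolute continuity of the $\Dominating[j]$ is what guarantees (via Brenier--McCann applied in the partial setting, as in Figalli) that each $\MultiDualMapping[\Bary]{j}$ is a genuine map, so the construction $(\MultiDualMapping[\Bary]{1},\ldots,\MultiDualMapping[\Bary]{\Number})_\#\Bary$ makes sense and has the right marginals.

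**Third**, with the bijection in hand the uniqueness equivalence is almost formal. If \eqref{eqn: BCm} has a unique minimizer $\Bary$: any solution $\sigma$ of \eqref{eqn: MMm} has $\Avg_\#\sigma = \Bary$, and I claim $\sigma$ is then forced — because the equality case in the forward inequality forces $(\Proj{j},\Avg)_\#\sigma \in \Opt[m]{\Dominating[j]}{\Bary}$ for every $j$, i.e. $(\Proj{j},\Avg)_\#\sigma = \RevMultiMappingCoupling[\Bary]{j}_\#\Bary$; since $\sigma$ is determined by the joint law of $(x_1,\ldots,x_\Number,\Avg(x_1,\ldots,x_\Number))$ and each $x_j = \MultiDualMapping[\Bary]{j}(\Avg)$ $\sigma$-a.e., $\sigma$ equals $(\MultiDualMapping[\Bary]{1},\ldots,\MultiDualMapping[\Bary]{\Number})_\#\Bary$, the unique candidate. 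For the converse, if \eqref{eqn: MMm} has a unique minimizer $\sigma$, then any minimizer $\nu$ of \eqref{eqn: BCm} produces via the map-construction a minimizer of \eqref{eqn: MMm}, which must be $\sigma$, and then $\nu = \Avg_\#\sigma$ is uniquely determined.

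**The main obstacle** I expect is not the cost identity or the inequalities but the bookkeeping around the word ``dominated'': in the reverse direction the right marginals of the $\Number$ optimal partial plans $\MultiCoupling{j}'$ need not all coincide with $\nu'$ (only be dominated by it), so gluing them into a single $\Number$-plan requires passing to a common sub-barycenter $\bar\nu\le\nu'$ of the correct mass and checking this does not increase the functional — and symmetrically, in the equality-case analysis for uniqueness, I must verify that the active part of $\nu'$ that is genuinely coupled is forced to be all of $\nu'$ (otherwise $\nu'$ could be modified off its active region without affecting optimality, breaking uniqueness). This is exactly the subtlety Figalli's sharp mass condition is designed to control in the two-marginal case, and I anticipate invoking Proposition~\ref{prop: BCm=MMm}'s hypotheses together with that condition (as will be used in Theorem~\ref{thm: partial barycenter is unique}) to close this gap cleanly.
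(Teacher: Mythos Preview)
Your approach is essentially the same as the paper's, following the Carlier--Ekeland scheme for the correspondence and then arguing uniqueness in both directions. Two corrections are worth making.

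First, your ``main obstacle'' is illusory: since any competitor $\nu'\in\Measures$ has mass exactly $m$, and any admissible partial plan $\gamma_j'$ realizing $\PartialWassSq{\Dominating[j]}{\nu'}$ also has total mass $m$, the second marginal $(\Proj{2})_\#\gamma_j'$ is a submeasure of $\nu'$ with the same mass $m$, hence equals $\nu'$. So the plans all share the common marginal $\nu'$ exactly and glue without any ``shrinking to a sub-barycenter''. (The paper bypasses gluing altogether: once a minimizing $\nu$ is known to be absolutely continuous via Lemma~\ref{lem: absolute continuity of BCm minimizers}, Brenier gives maps $S_j^\nu$ defined on $\spt\nu$ directly, and one simply forms $(S_1^\nu,\ldots,S_N^\nu)_\#\nu$.) Relatedly, the existence of $S_j^\nu$ as a map \emph{from} $\nu$ requires absolute continuity of $\nu$, not of the $\Dominating[j]$; the paper supplies this through Lemma~\ref{lem: absolute continuity of BCm minimizers}, which in turn uses the absolute continuity of the $\Dominating[j]$.

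Second, in your converse uniqueness direction you assert ``and then $\nu = \Avg_\#\sigma$'' as though it were automatic. It is not: from $(S_1^\nu,\ldots,S_N^\nu)_\#\nu = \sigma$ you would need $\tfrac{1}{N}\sum_j S_j^\nu = \Id$ $\nu$-a.e.\ to conclude $\Avg_\#\sigma = \nu$, and this identity is itself a nontrivial property of barycenters. The paper closes this gap differently: it observes that any two minimizers $\nu$, $\bar\nu$ of \eqref{eqn: BCm} must both solve the \emph{classical} barycenter problem \eqref{eqn: BC} for the common active marginals $(\Proj{j})_\#\sigma$, and then invokes the Agueh--Carlier uniqueness result for \eqref{eqn: BC} to conclude $\nu = \bar\nu$.
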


Then, we will turn to the question of uniqueness in~\eqref{eqn: BCm}. We establish the following theorem, which shows that under conditions analogous to those in~\cite{Fig10a}, we indeed obtain uniqueness in~\eqref{eqn: BCm}:
\begin{thm}[Uniqueness of partial barycenters]\label{thm: partial barycenter is unique}
 For $1\leq j\leq \Number$, fix absolutely continuous measures $\Dominating[j]$, each with finite mass and with densities $\DominatingDens[j]$. Writing $\DominatingCommon$ for the absolutely continuous measure with density $\DominatingCommonDens:=\min_{1\leq j\leq \Number}{\DominatingDens[j]}$, fix some $\m\geq0$ satisfying 
 \begin{align*}
 \mass{\DominatingCommon}\leq\m\leq \min_{1\leq j\leq \Number}{\{\mass{\Dominating[j]}\}}.
 \end{align*}
  Then there exists a unique minimizer in $\Measures$ of~\eqref{eqn: BCm}.
\end{thm}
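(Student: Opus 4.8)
The plan is to follow the strategy used by Figalli in \cite{Fig10a} for the two-marginal problem, suitably adapted to the barycenter functional $\Functional[\m]{\cdot}$. By Proposition~\ref{prop: BCm=MMm}, uniqueness in \eqref{eqn: BCm} is equivalent to uniqueness in \eqref{eqn: MMm}, so it suffices to argue at the level of whichever of the two formulations is more convenient; I would work primarily with \eqref{eqn: BCm}. First I would record the convexity structure: the map $\Bary \mapsto \PartialWassSq{\Dominating[j]}{\Bary}$ is obtained by minimizing the (linear in the coupling) transport cost over the convex set of admissible partial couplings, and one checks that along a linear interpolation $\Bary_t = (1-t)\Bary_0 + t\Bary_1$ of two candidate minimizers, the functional $\Functional[\m]{\Bary_t}$ is convex; hence if $\Bary_0$ and $\Bary_1$ are both minimizers, so is every $\Bary_t$, and moreover for each $j$ the active submeasure of $\Dominating[j]$ and the optimal partial coupling interpolate linearly as well. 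This reduces the problem to showing that the affine segment of minimizers is in fact a single point.

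Next, I would exploit the first-order (Euler--Lagrange / Kantorovich duality) conditions for the partial barycenter. For each $j$ there is a set $A_j$ — the ``active region'' in $\Dominating[j]$, where the active submeasure coincides with $\Dominating[j]$ — and on the complementary inactive region the density of the active submeasure is strictly below $\DominatingDens[j]$. The key structural fact to extract is that the barycenter measure $\Bary$ satisfies a pointwise density constraint: an optimal $\Bary$ has density $\BaryDens$ bounded above by something comparable to $\DominatingCommonDens$ on the region where mass is ``frozen at the diagonal,'' and the hypothesis $\mass{\DominatingCommon}\le \m$ is exactly what guarantees that the competitor has enough mass to avoid being forced onto the pointwise minimum. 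Concretely, one shows that at a minimizer the set where $\BaryDens$ could exceed the constraint carries no mass, using that if it did one could shift an $\epsilon$ of mass off it and strictly decrease $\Functional[\m]{\cdot}$ — this is where the sharp mass condition enters, mirroring the fact in \cite{Fig10a} that when $\mass{\DominatingCommon}>\m$ one can place a diagonal coupling and uniqueness genuinely fails.

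Then I would run the contradiction argument of Figalli: suppose $\Bary_0 \neq \Bary_1$ are two minimizers, with associated active submeasures $\mu^j_{0,\le}$, $\mu^j_{1,\le}$ of $\Dominating[j]$ and optimal partial couplings. Using absolute continuity of the $\Dominating[j]$ and the Brenier--McCann structure of each two-marginal optimal partial transport (each active piece is transported by a gradient of a convex function, as in \cite{CM10,Fig10a}), one builds from $\Bary_0$ and $\Bary_1$ a new competitor — roughly, take the barycenter $\tfrac12(\Bary_0+\Bary_1)$ or a min-type construction $\BaryMin$ with density $\min(\BaryDens[0],\BaryDens[1])$ topped up with the appropriate extra mass — whose cost is strictly smaller unless $\Bary_0=\Bary_1$ a.e.; strict convexity of $t\mapsto \norm{x-y}^2$ forces the optimal couplings to agree, hence the barycenters to agree. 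The main obstacle, and the place where genuinely new work beyond \cite{Fig10a} is needed, is the multi-marginal bookkeeping: unlike the two-marginal case there is no single hyperplane or monotone structure to lean on, the active regions $A_j$ interact through the common barycenter $\Bary$, and one must show simultaneously for all $N$ marginals that the active submeasures are determined. I expect the heart of the proof to be a careful argument that the measure $\DominatingCommon$ (equivalently, the ``diagonally frozen'' part) is common to all minimizers — that is, any minimizer must activate exactly the mass $\mass{\DominatingCommon}$ in the ``everyone agrees'' region plus a uniquely-determined remainder — after which the uniqueness of each associated two-marginal optimal partial transport, combined with the linear-interpolation reduction from the first step, closes the argument.
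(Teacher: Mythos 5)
Your proposal has the right overall shape (reduce to linearity of $\Func$ along the segment $\Bary[t]=(1-t)\Bary[0]+t\Bary[1]$, then derive a contradiction by producing a strictly cheaper competitor), and that much does match the paper's argument. But it leaves genuine gaps at exactly the places where the paper has to work hardest, and in one place it proposes a mechanism that does not actually close the argument.

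The missing key idea is the structural consequence of the equality case of convexity, which the paper isolates in Lemma~\ref{lem: convexity claims}: if $\PartialWassSq{\Dominating[j]}{\Bary[t]}$ is affine in $t$, then for $i=0,1$ the optimal partial transport maps $\MultiDualMapping[i]{j}$ must \emph{coincide} on $\{\BaryCommonDens>0\}$ \emph{and} must equal the \emph{identity} on $\{\BaryCommonDens<\BaryDens[i]\}$ (i.e.\ on the excess mass of $\Bary[i]$). It is the identity-map conclusion — obtained not by duality but by peeling off the common coupling $\CommonCoupling$ and invoking Figalli's Theorem~2.6 on the remainder — that drives everything afterwards: it forces $\BaryDens[i]\leq\DominatingDens[j]$ a.e.\ on $\StaySet:=\{\BaryDens[0]\neq\BaryDens[1]\}$ for every $j$ simultaneously, hence $\BaryDens[i]\leq\DominatingCommonDens$ there. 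Your proposal replaces this with an appeal to ``Euler--Lagrange / Kantorovich duality conditions'' and to ``strict convexity of $t\mapsto\norm{x-y}^2$ forcing the optimal couplings to agree.'' Neither step works as stated: the paper makes no use of duality or first-order conditions, and strict convexity of the integrand does not by itself force the partial couplings from $\Bary[0]$ and $\Bary[1]$ to $\Dominating[j]$ to agree — the coincidence of maps in the paper comes from the Brenier graph structure of the interpolated coupling $\Coupling[1/2]$, and it only holds on $\{\BaryCommonDens>0\}$, not globally. Without the identity-on-excess conclusion and the resulting bound $\BaryDens[i]\leq\DominatingCommonDens$ on $\StaySet$, you have no way to bring the hypothesis $\m\geq\mass{\DominatingCommon}$ into play, and your description of where that hypothesis enters (``the competitor has enough mass to avoid being forced onto the pointwise minimum'') is too vague to be verified.

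The competitor you propose is also not what is needed. Taking $\tfrac12(\Bary[0]+\Bary[1])$ is unhelpful because, by the convexity reduction, $\Bary[1/2]$ is already a minimizer; and a pure min-construction $\BaryMin$ with density $\min\{\BaryDens[0],\BaryDens[1]\}$ does not have mass $\m$ and there is no obvious canonical way to ``top it up.'' The paper instead keeps a single minimizer $\Bary[i]$ and redistributes mass between two carefully chosen disjoint Borel sets: $\MinBigger\subset\StaySet$ where $\DominatingCommonDens>\BaryDens[i]$ and all $\MultiDualMapping[i]{j}$ are the identity, and $\MinSmaller$ where $\DominatingCommonDens<\BaryDens[i]$, with a mass-balance condition~\eqref{eqn: bigger smaller sets} and a no-inflow condition~\eqref{eqn: doesn't get mass from elsewhere}; the new measure is $\BaryBar=\Bary[i]+(\DominatingCommonDens-\BaryDens[i])\Indicator_{\MinSmaller\cup\MinBigger}dx$. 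Verifying admissibility ($\Paren{\MultiDualMapping[i]{j}}_\#\BaryBar\leq\Dominating[j]$) and strict cost improvement is nontrivial (Claims~1--3 in Appendix~\ref{sect: proof of claims}) and is precisely the ``multi-marginal bookkeeping'' you anticipate but do not supply. Finally, note the paper explicitly warns (Remark~\ref{classcouplingnotoptimal}) that the natural multi-marginal analogue of Figalli's Proposition~2.4 fails, so the strict-convexity-in-$m$ route suggested by the phrase ``follow the strategy used by Figalli'' is a dead end; the paper's convexity argument is over $\Bary$ at fixed $m$, not over $m$.
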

Finally, by combining Proposition~\ref{prop: BCm=MMm} with Theorem~\ref{thm: partial barycenter is unique}, we immediately obtain the following corollary:
\begin{cor}\label{cor: uniqueness}
Under the assumptions of Theorem~\ref{thm: partial barycenter is unique}, the multi-marginal optimal partial transport problem~\eqref{eqn: MMm} has a unique solution.
\end{cor}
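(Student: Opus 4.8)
\textbf{Proof proposal for Corollary~\ref{cor: uniqueness}.}
The plan is simply to chain together Proposition~\ref{prop: BCm=MMm} and Theorem~\ref{thm: partial barycenter is unique}, after disposing of one degenerate case. First I would observe that the hypotheses of Theorem~\ref{thm: partial barycenter is unique} only guarantee $\m\geq 0$, whereas Proposition~\ref{prop: BCm=MMm} requires $\m>0$, so the case $\m=0$ must be handled by hand. But this is immediate: the unique measure in $\PartialMultiCouples{\Dominating[1],\ldots,\Dominating[\Number]}$ with total mass $0$ is the zero measure, so~\eqref{eqn: MMm} has a (trivially unique) solution. Hence from now on I may assume $\m>0$.

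With $\m>0$, the $\Dominating[j]$ absolutely continuous with finite mass, and $\m\leq\min_{1\leq j\leq \Number}\mass{\Dominating[j]}$ — exactly the standing hypotheses of Theorem~\ref{thm: partial barycenter is unique}, which therefore also meet the hypotheses of Proposition~\ref{prop: BCm=MMm} — I would apply the last assertion of Proposition~\ref{prop: BCm=MMm}: the minimizer of~\eqref{eqn: MMm} is unique if and only if the minimizer of~\eqref{eqn: BCm} is unique. Theorem~\ref{thm: partial barycenter is unique} asserts that, under precisely these assumptions, the minimizer of~\eqref{eqn: BCm} in $\Measures$ is unique. Combining these two facts yields uniqueness of the solution to~\eqref{eqn: MMm}, which is the claim.

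There is essentially no obstacle here beyond the bookkeeping just described: the entire mathematical content sits in the two results already established, and the only point requiring care is confirming that the hypotheses of Theorem~\ref{thm: partial barycenter is unique} do entail those of Proposition~\ref{prop: BCm=MMm} (they do, once the trivial case $\m=0$ is separated off). I would also make a brief remark that existence of a minimizer in~\eqref{eqn: MMm} is standard (as noted in the introduction), so that ``unique solution'' is meaningful; and that the explicit form of the unique optimal plan can, if desired, be read off from the second half of Proposition~\ref{prop: BCm=MMm} as $\Paren{\MultiDualMapping[\Bary]{1},\ldots,\MultiDualMapping[\Bary]{\Number}}_{\#}\Bary$, where $\Bary$ is the unique partial barycenter.
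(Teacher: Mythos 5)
Your proof takes exactly the same route as the paper's: the paper derives Corollary~\ref{cor: uniqueness} immediately by combining the uniqueness equivalence in Proposition~\ref{prop: BCm=MMm} with Theorem~\ref{thm: partial barycenter is unique}. Your additional observation that the case $\m=0$ must be handled separately (since Proposition~\ref{prop: BCm=MMm} assumes $\m>0$ while Theorem~\ref{thm: partial barycenter is unique} allows $\m\geq 0$) is a correct and slightly more careful bookkeeping point that the paper elides.
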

Surprisingly, certain ``monotonicity properties '' enjoyed by solutions of~\eqref{eqn: OTm} are not exhibited by solutions of~\eqref{eqn: MMm}; we will briefly demonstrate this fact with some examples later.

 One might expect that an alternature approach, following the work of Figalli \cite{Fig10a}, could be used to establish Corollary \ref{cor: uniqueness}; more precisely, that one could show that the function $ m \mapsto \min_{\MMCoupling'\in\PartialMultiCouples{\Dominating[1],\ldots,\Dominating[\Number]},\ \mass{\MMCoupling'}=m}\Cost{\MMCoupling'}$ is strictly convex on  $[\mass{\DominatingCommon}, \min_{1\leq j\leq \Number}{\{\mass{\Dominating[j]}\}}]$ and use this fact to deduce uniqueness of the optimal plan in \eqref{eqn: MMm}.  As one of our examples illustrates (see Remark \ref{classcouplingnotoptimal} below), it turns out that the natural multi-marginal analogue of a key preliminary result of Figalli (Proposition 2.4 in \cite{Fig10a}) \textit{does not} hold.  As this proposition is used in a crucial way in the proof of Figalli's main result (\cite[Theorem 2.10]{Fig10a}), a direct extension of his techniques cannot be used to prove Corollary \ref{cor: uniqueness}.

We pause now to describe an economic interpretation of the partial barycenter problem, in the context of the well known factories-and-mines interpretation of the classical optimal transport problem.
\subsection{Interpretation of the partial barycenter problem}  
The optimal transport problem is frequently interpreted as the problem of matching the production of a resource (say iron ore) by a distribution of mines over a landscape $M \subset \mathbb{R}^n$ with consumption of that resource by a distribution of factories over the same landscape.   These distributions are represented by measures $\Dominating$ and $\Bary$, respectively.  The cost function $c(x,y)$ ($=\norm{x-y}^2$ in our setting) represents the cost to move one unit of iron from a mine at position $x$ to a factory at position $y$.   If the total production capacity of the mines matches the total consumption capacity by the factories (that is, the total masses of $\Dominating$ and $\Bary$ coincide), and one would like to use all of the produced resources, the problem of determining which mine should supply which factory to minimize the total transportation cost is represented by ~\eqref{eqn: OT}. More realistically, the total production capacity of the mines may not match the total consumption capacity of the factories, and one may only wish to consume a smaller portion $\m$ of the total capacity; in this case, the analogous problem is represented by ~\eqref{eqn: OTm}, as is discussed in \cite{CM10}.

Suppose now that production of a certain good requires several resources; for example, iron, aluminum, and nickel, and that the company has not yet built their factories (and so is free to build them at any locations they choose).  Production capacity of the resources are given by distributions $\Dominating[j]$ of mines over a landscape $M \subseteq \mathbb{R}^n$, for $j=1,2,3,\ldots,\Number$.  Given costs $c_j(x_j,y)$ ($\norm{x_j-y}^2$ here, but see also the extension to more general costs in Section~\ref{sec: general costs}) to move a unit of resource $j$ from a mine at position $x_j$ to a (potential) factory at position $y$, the company now wishes to \textit{build} a distribution of factories, $\Bary$, where these resources will be consumed, in order to minimize the sum of all the total transportation costs; if the total production of each resource is the same and all produced resources are to be consumed, this amounts to the barycenter problem ~\eqref{eqn: BC}. 

 However, if, perhaps because of limited demand for the good in question, only a fixed portion $\m$ of each resource is to be consumed (less than the smallest total production capacity of the resources, which may now differ for different $j$), one obtains the partial barycenter problem ~\eqref{eqn: BCm}.
\subsection{Organization of the paper}

The remainder of this paper is organized as follows. In Section~\ref{sec: connection} we will establish Proposition~\ref{prop: BCm=MMm}.  Section~\ref{sec: uniqueness in BCm} is then devoted to the proof of Theorem~\ref{thm: partial barycenter is unique}. In Section~\ref{sec: counterexamples}, we discuss some other properties of interest of minimizers of~\eqref{eqn: MMm} and ~\eqref{eqn: BCm}.  We first present two counterexamples to a ``monotonicity property'' of active submeasures (see Proposition~\ref{prop: nonmonotone} for a precise statement),
%
followed by a discussion of points where the active submeasures fail to saturate the prescribed measures $\Dominating[j]$.  These examples are in stark constrast to the two marginal partial transport problem \eqref{eqn: OTm}. We close the section with a brief remark on regularity properties of the ``free boundary''. Finally, in Section~\ref{sec: general costs} we discuss an extension of our main results to more general cost functions.

\section{Connection between multi-marginal optimal partial transport and the partial barycenter}\label{sec: connection}
For technical reasons, we will find it more convenient to work with absolutely continuous measures, hence we define the following notation:
\begin{align*}
 \Constraint:=\{\text{absolutely continuous }\Bary\mid \mass{\Bary}=\m,\ \int_{\R^n}\norm{x}^2\Bary(dx)<\infty\}.
\end{align*}
A simple argument now shows that any minimizer in~\eqref{eqn: BCm} is actually absolutely continuous, hence it is equivalent to make the minimization over $\Constraint$ in the problem, rather than $\Measures$.
\begin{lem}\label{lem: absolute continuity of BCm minimizers}
If all $\Dominating[j]$ for $1\leq j\leq \Number$ are absolutely continuous, then  any $\Bary \in \Measures$ which minimizes $\Bary \mapsto \Functional[\m]{\Bary, \Dominating[1],\ldots, \Dominating[\Number]}$ is absolutely continuous.
\end{lem}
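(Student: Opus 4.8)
The plan is to recognise any minimizer $\Bary$ of $\Functional[\m]{\cdot,\Dominating[1],\ldots,\Dominating[\Number]}$ as an ordinary Monge--Kantorovich barycenter, in the sense of~\eqref{eqn: BC}, of an auxiliary family of absolutely continuous measures, and then to invoke the known absolute continuity of such barycenters.

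I would first fix a minimizer $\Bary\in\Measures$ (if $\m=0$ there is nothing to prove, so assume $\m>0$), and for each $1\le j\le\Number$ pick an optimal partial plan $\MultiCoupling{j}\in\Opt[m]{\Dominating[j]}{\Bary}$, whose existence is standard under the running hypotheses. Let $\PairedMulti{j}$ denote the left marginal of $\MultiCoupling{j}$; then $\PairedMulti{j}\le\Dominating[j]$, so $\PairedMulti{j}$ is absolutely continuous, of mass $\m$ and with finite second moment. Because $\mass{\MultiCoupling{j}}=\m=\mass{\Bary}$ while the right marginal of $\MultiCoupling{j}$ is dominated by $\Bary$, that right marginal must be $\Bary$ itself. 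Hence $\MultiCoupling{j}$ is a coupling of $\PairedMulti{j}$ and $\Bary$, and in fact an optimal one (every coupling of $\PairedMulti{j}$ with $\Bary$ has mass $\m$ and marginals dominated by $\Dominating[j]$ and $\Bary$, hence competes in~\eqref{eqn: OTm}), so that $\WassSq{\PairedMulti{j}}{\Bary}=\PartialWassSq{\Dominating[j]}{\Bary}$ for each $j$.

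The key step is then to check that $\Bary$ is a barycenter of $\PairedMulti{1},\ldots,\PairedMulti{\Number}$, i.e.\ that it minimizes $\rho\mapsto\sum_{j=1}^\Number\WassSq{\PairedMulti{j}}{\rho}$ over $\rho\in\Measures$. The point is that for an \emph{arbitrary} $\rho\in\Measures$, any coupling of $\PairedMulti{j}$ and $\rho$ remains admissible in the partial transport problem between $\Dominating[j]$ and $\rho$ --- its left marginal $\PairedMulti{j}$ is dominated by $\Dominating[j]$, its right marginal is $\rho$, and it has mass $\m$ --- whence $\WassSq{\PairedMulti{j}}{\rho}\ge\PartialWassSq{\Dominating[j]}{\rho}$. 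Summing over $j$ and combining with the identities above and the minimality of $\Bary$,
\begin{align*}
\sum_{j=1}^\Number\WassSq{\PairedMulti{j}}{\rho} &\ge \Functional[\m]{\rho,\Dominating[1],\ldots,\Dominating[\Number]} \ge \Functional[\m]{\Bary,\Dominating[1],\ldots,\Dominating[\Number]}\\
&= \sum_{j=1}^\Number\WassSq{\PairedMulti{j}}{\Bary},
\end{align*}
which is what is needed. Finally, since every $\PairedMulti{j}$ is absolutely continuous, the absolute continuity of classical barycenters of such families --- established by Agueh and Carlier~\cite{AC11} --- applies and forces $\Bary$ to be absolutely continuous.

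I expect the only genuinely non-elementary ingredient to be that last fact (a barycenter of measures, at least one of which is absolutely continuous, is itself absolutely continuous); it rests on writing the barycenter as the push-forward of an absolutely continuous marginal under the gradient of a strongly convex potential, whose inverse is Lipschitz and therefore maps Lebesgue-null sets to Lebesgue-null sets. The remainder is bookkeeping; the one place to be slightly careful is that the active submeasure $\PairedMulti{j}$, although produced from the specific minimizer $\Bary$, is still a submeasure of $\Dominating[j]$, and this is precisely what makes the inequality $\WassSq{\PairedMulti{j}}{\rho}\ge\PartialWassSq{\Dominating[j]}{\rho}$ hold against \emph{every} competitor $\rho$, not merely against $\rho=\Bary$.
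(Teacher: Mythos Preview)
Your proof is correct and follows essentially the same approach as the paper: recognize the partial-barycenter minimizer $\Bary$ as a classical barycenter of its active submeasures $\PairedMulti{j}\le\Dominating[j]$, then invoke the known absolute continuity of classical barycenters. The paper compresses your argument into a single sentence (``any such $\Bary$ is necessarily optimal in the classical barycenter problem for the active submeasures''), while you spell out the comparison inequality $\WassSq{\PairedMulti{j}}{\rho}\ge\PartialWassSq{\Dominating[j]}{\rho}$ that justifies it; the paper also cites \cite[Theorem~3.3]{P13a} as the primary reference for the final step (with \cite[Theorem~5.1]{AC11} as a secondary one), whereas you cite only \cite{AC11}.
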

\begin{proof} 

Note that any such $\Bary$ is necessarily optimal in the classical barycenter problem~\eqref{eqn: BC} for the active submeasures.  As these are necessarily absolutely continuous by the absolute continuity of the $\Dominating[j]$, the result in  \cite[Theorem 3.3]{P13a} (see also \cite[Theorem 5.1]{AC11} ) implies the absolute continuity of $\Bary$.
\end{proof}
With the above result in hand, we can now show Proposition~\ref{prop: BCm=MMm}.
\begin{proof}[Proof of Proposition~\ref{prop: BCm=MMm}]
Fix $\m$ as in the statement of the proposition. It is straightforward to verify that, for any $x_1,\ldots,x_\Number \in \mathbb{R}^n$,
\begin{equation}
\sum_{j\neq k}\norm{x_j -x_k}^2 = 2N\min_{y \in \mathbb{R}^n}\sum_{j=1}^\Number\norm{x_j-y}^2
\end{equation}
and that the minimum on the right hand side is attained uniquely at $y=\Avg(x_1,\ldots, x_\Number)$ (recall the definition~\eqref{eqn: average}).    The proof of the first two assertions is then a straightforward adapatation of the argument of Carlier and Ekeland in~\cite[Proposition 3]{CE10} (in fact, the three bullet points below closely mirror the three assertions in the proof of~\cite[Proposition 3]{CE10}), and we only list the main steps:
\begin{itemize}
\item For any $\Bary\in\Constraint$, let $\MultiDualMapping[\Bary]{j}$ be the optimal mapping satisfying $\RevMultiMappingCoupling[\Bary]{j}_\#\Bary\in\Opt[m]{\Dominating[j]}{\Bary}$. The measure $\MMCoupling :=\Paren{\MultiDualMapping[\Bary]{1},\ldots, \MultiDualMapping[\Bary]{\Number}}_{\#}\Bary$ is admissible in~\eqref{eqn: MMm}, and $\Cost{\MMCoupling} \leq \Functional{\Bary}$, hence (also recalling Lemma~\ref{lem: absolute continuity of BCm minimizers}) the minimum value in~\eqref{eqn: MMm} is less than the minimum value in~\eqref{eqn: BCm}.  
\item For any minimizing $\MMCoupling$ in~\eqref{eqn: MMm}, if we define $\Bary := \Avg_{\#}\MMCoupling$, we then have $\Cost{\MMCoupling} = \Functional{\Bary}$, and in light of the above, the minimum values in~\eqref{eqn: MMm} and~\eqref{eqn: BCm} are equal, and $\Bary$ is a minimizer in~\eqref{eqn: BCm}.  It also follows that $\Paren{\Proj{j}\times\Avg}_{\#}\MMCoupling$ is a minimizer in the partial transport problem~\eqref{eqn: OTm} between $\Dominating[j]$ and $\Bary$, where we move mass $m = \mass{\Bary}$; i.e. it belongs to $\Opt[\m]{\Dominating[j]}{\Bary}$.
\item For any minimizing $\Bary\in\Constraint$ in~\eqref{eqn: BCm}, the measure $\MMCoupling :=\Paren{\MultiDualMapping[\Bary]{1},\ldots, \MultiDualMapping[\Bary]{\Number}}_{\#}\Bary$ must now be minimizing in~\eqref{eqn: MMm} by the above two points.
\end{itemize}

Turning to the uniqueness assertion, we first assume that the solution $\Bary$ to \eqref{eqn: BCm} is unique.  Note that solutions to~\eqref{eqn: MMm} are in particular optimal in the regular multi-marginal problem \eqref{eqn: MM} for their marginals.  Uniqueness of minimizers in~\eqref{eqn: MM} (see \cite{GS98}) then implies that, if $\MMCoupling$ and $\bar \MMCoupling$ are distinct minimizers in~\eqref{eqn: MMm}, at least one of their marginals must differ.  

Since $\Avg_{\#}\MMCoupling$ and $\Avg_{\#}\bar \MMCoupling$ are minimizers in~\eqref{eqn: BCm} by the above, by our uniqueness assumption we have $\Avg_{\#}\MMCoupling =\Avg_{\#}\bar \MMCoupling=\Bary $. Additionally, $\TwoCoupling[j] := \Paren{\Proj{j}\times\Avg}_{\#}\MMCoupling$ and $\BarTwoCoupling[j] := \Paren{\Proj{j}\times\Avg}_{\#}\bar \MMCoupling$ both belong to $\Opt[\m]{\Dominating[j]}{\Bary}$.  Since clearly $\m$ satisfies the hypothesis of the uniqueness result~\cite[Proposition 2.2 and Theorem 2.10]{Fig10a}, we then have $\TwoCoupling[j] = \BarTwoCoupling[j]$ for all $j$, and in particular, for all $j$ the marginals $\Paren{\Proj{j}}_{\#}\MMCoupling$ and $\Paren{\Proj{j}}_{\#}\BarMMCoupling$  must coincide, which is a contradiction.

Conversely, suppose the solution $\MMCoupling$ to \eqref{eqn: MMm} is unique.  If $\Bary$ and $\bar \Bary\in\Constraint$ both minimize \eqref{eqn: BCm}, we have that $\Paren{\MultiDualMapping[\Bary]{1}, \ldots, \MultiDualMapping[\Bary]{\Number}}_\#\Bary =\MMCoupling=\Paren{\MultiDualMapping[\bar\Bary]{1}, \ldots, \MultiDualMapping[\bar\Bary]{\Number}}_\#\bar \Bary $.  In particular, note that $\Paren{\MultiDualMapping[\Bary]{j}}_ \# \Bary =\Paren{\Proj{j}}_\#\MMCoupling =  \Paren{\MultiDualMapping[\bar\Bary]{j}}_ \# \bar \Bary$ and both $\Bary$ and $\bar \Bary$ solve the regular barycenter problem \eqref{eqn: BC}, for the measures $\Paren{\MultiDualMapping[\Bary]{j}}_\# \Bary= \Paren{\MultiDualMapping[\bar\Bary]{j}}_\# \bar \Bary$ in place of the $\Dominating[j]$. Thus it follows from the uniqueness result~\cite[Proposition 3.5]{AC11} that $\Bary =\bar \Bary$.
\end{proof}

\section{Uniqueness of the partial barycenter}\label{sec: uniqueness in BCm}
We now turn to the proof of Theorem~\ref{thm: partial barycenter is unique}, uniqueness of the partial barycenter in the minimization problem~\eqref{eqn: BCm}. Throughout this section, for any measure $\Coupling$ on $\R^n\times\R^n$, we will use the following notation:
\begin{align*}
 \Cost[2]{\Coupling}:=\int_{\R^n\times\R^n} \norm{x-y}^2\Coupling(dx, dy).
\end{align*}

\subsection{Outline of the uniqueness proof}
As our proof is fairly long and technical, we believe it may be useful to the reader to provide an informal outline of the main ideas:
\begin{enumerate}[(I)]
\item We first show that the function  
\begin{equation}\label{eqn: partial transport function}
\Bary \mapsto \PartialWassSq{\Dominating}{\Bary}
\end{equation}
 is convex on $\Constraint$  (along linear interpolations); see Lemma \ref{lem: convexity claims}.
\item Furthermore,  \emph{strict} convexity of \eqref{eqn: partial transport function} can fail along an interpolant $\Bary[t]=(1-t)\Bary[0]+t\Bary[1]$ only under very special circumstances. Roughly speaking, if strict convexity fails along $\Bary[t]$, the optimal partial transport maps $\DualMapping[i]$ from $\Bary[i]$ to $\mu$, for $i=0$ and $1$ must \emph{coincide} on the intersection of the supports of $\Bary[0]$ and $\Bary[1]$. Additionally, the map $\DualMapping[0]$ is the \emph{identity} mapping wherever the density of $\Bary[0]$ strictly dominates that of $\Bary[1]$ (with a symmetric conclusion when the roles of $\Bary[0]$ and $\Bary[1]$ are reversed). This is the second part of Lemma \ref{lem: convexity claims}.
\item  Now the function $F$ to be minimized is a sum of terms of the form \eqref{eqn: partial transport function}. Thus, the convexity shown in (I) implies that if there are two distinct minimizers $\Bary[0]$ and $\Bary[1]$ of $F$,  each summand must actually be \emph{linear} along the (linear) interpolant between the two minimizers.
%
Therefore, the special properties in (II) above hold for the two marginal optimal partial transport maps from $\Bary[i]$ to \emph{each} marginal $\Dominating[j]$.   In turn, this implies that on the set where the densities $\BaryDens[0]$ of $\Bary[0]$ and $\BaryDens[1]$ of $\Bary[1]$ are \emph{not equal},  both densities  are dominated by each $\DominatingDens[j]$ (the density of $\Dominating[j]$) and thus by $\DominatingCommonDens$ (the density of $\DominatingCommon$). 
%
This is the assertion \eqref{eqn: barycenter dominated by pointwise minimum on stay set} in the body of the proof.

\item Now, as the measures $\Bary[0]$ and $\Bary[1]$ are \emph{different}, we conclude that for each of $i=0$ and $1$ there must be a set of positive Lebesgue measure where $\BaryDens[i] < \DominatingCommonDens$. 

(If this failed, say for $i=0$, (III) would imply $\BaryDens[0] = \DominatingCommonDens$ a.e. on the set where  $\BaryDens[0] \neq \BaryDens[1]$.  But then, as $\BaryDens[1] \leq \DominatingCommonDens =\BaryDens[0]$  on this set (again by (III)),  the fact that $\mass{\Bary[0]} =\mass{\Bary[1]} $,  would  imply $\BaryDens[0] = \BaryDens[1]$ a.e. on this set.  This would imply that $\Bary[0]=\Bary[1]$, a contradiction.)

\item  Now if $m> \mass{\DominatingCommon}$, for one of $i=0$ or $1$ we can find two disjoint sets with special properties.  The first is a set $\MinBigger$ on which $\DominatingCommonDens>\BaryDens[i]$ and all associated two marginal optimal partial transport maps are the identity mapping; the existence of this first set essentially follows by combining points (IV) and (II) above.  The second is a set $\MinSmaller$ on which $\DominatingCommonDens<\BaryDens[i]$ .  We also demand a certain ``mass balance'' property \eqref{eqn: bigger smaller sets} between the sets and a specific property \eqref{eqn: doesn't get mass from elsewhere} of the image of the first set under the optimal partial transport maps  -- essentially, that the optimal partial transport map from $\Bary[i]$ to each $\Dominating[j]$ doesn't move any mass from outside of $\MinBigger$ into it.  (Claim 1 in the proof; this strongly relies on \eqref{eqn: barycenter dominated by pointwise minimum on stay set}).

\item  We now modify the minimizer $\Bary[i]$ by ``filling up'' the part of the measure supported on $\MinBigger$ to match $\DominatingCommon$, and ``sucking away'' a part of the measure supported on $\MinSmaller$ of the same mass (this is the measure $\BaryBar$); pushing forward this measure by the original optimal partial transport maps gives rise to an admissible competitor $\BaryBar \in \Measures$ in the minimization of $F$ (Claim 2 in the proof, which crucially uses property \eqref{eqn: doesn't get mass from elsewhere}). Since the original optimal partial transport maps are the identity on the first set, and must move at least some mass on the second, this modified candidate under the same mappings ends up providing a strictly smaller value in $F$ than $\Bary[i]$, contradicting that $\Bary[i]$ was a minimizer; this is Claim 3 which finishes the proof.

\end{enumerate}

\subsection{A preliminary lemma on convexity}
The following lemma is required for the proof of Theorem \ref{thm: partial barycenter is unique}.  It shows that for a fixed $\Dominating$, the functional $\PartialWassSq{\Dominating}{\cdot}$ is convex with respect to linear interpolation. Additionally, we show that non-strict convexity along a segment connecting two measures $\Bary[0]$ and $\Bary[1]$ implies some structure of the optimal mappings pushing $\Bary[i]$ forward to a submeasure of $\Dominating$: namely that the optimal mappings must match on the support of the ``pointwise minimum'' of $\Bary[0]$ and $\Bary[1]$, while both mappings must be the identity mapping when this ``pointwise minimum'' fails to saturate.
\begin{lem}\label{lem: convexity claims}
 Let $\Dominating$ be an absolutely continuous measure with $\mass{\Dominating}\geq \m$, let $\Bary[0]$, $\Bary[1]\in\Constraint$, and define $\Bary[t]:=(1-t)\Bary[0]+t\Bary[1]$. Then 
 \begin{align*}
 \PartialWassSq{\Dominating}{\Bary[t]}\leq (1-t)\PartialWassSq{\Dominating}{\Bary[0]}+t\PartialWassSq{\Dominating}{\Bary[1]}
 \end{align*}
  for all $t\in[0,1]$.
  Now suppose equality holds for all $t\in[0, 1]$, and for $i=0$ or $1$, let $\DualMapping[i]$ be any measurable mapping satisfying $\RevMappingCoupling[i]_\#\Bary[i]\in \Opt[m]{\Dominating}{\Bary[i]}$.  Also, let $\BaryCommon$ be the measure with density $\BaryCommonDens:=\min{\{\BaryDens[0],\BaryDens[1]\}}$, where $\BaryDens[i]$ is the density of $\Bary[i]$.  
  Then we have that 
  \begin{align}
  \DualMapping[0](y)&=\DualMapping[1](y)\text{ a.e. on }\{\BaryCommonDens>0\}  \label{eqn: mappings match on pointwise minimum},\\
  \DualMapping[i](y)&=y\text{ a.e. on }\{\BaryCommonDens<\BaryDens[i]\}.\label{eqn: identity on disjoint part}
  \end{align}
 \end{lem}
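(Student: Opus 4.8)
The plan is to treat the convexity statement and the rigidity statement separately. For convexity, the natural approach is via the Kantorovich dual of the partial transport problem~\eqref{eqn: OTm}. Recall that $\PartialWassSq{\Dominating}{\Bary}$ can be written as a sup over admissible dual pairs (or, as in~\cite{Fig10a,CM10}, reduced to a standard transport problem between the active submeasures, or between $\Dominating$ and $\Bary$ after adding a ``reservoir'' point); the key point is that for a \emph{fixed} dual-feasible pair $(\phi,\psi)$ the map $\Bary \mapsto \int \psi\, d\Bary$ (plus the $\Dominating$-term, which does not depend on $\Bary$) is \emph{affine} in $\Bary$. Taking the supremum of affine functions yields a convex function, and this gives the desired inequality along $\Bary[t]=(1-t)\Bary[0]+t\Bary[1]$. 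An alternative, more hands-on route is to take optimal partial plans $\Coupling[0]\in\Opt[m]{\Dominating}{\Bary[0]}$ and $\Coupling[1]\in\Opt[m]{\Dominating}{\Bary[1]}$ and form $\Coupling[t]:=(1-t)\Coupling[0]+t\Coupling[1]$; this is admissible for $\Opt[m]{\Dominating}{\Bary[t]}$ (its second marginal is $\Bary[t]$, its first marginal is $\leq \Dominating$ since both $\Coupling[0],\Coupling[1]$ have first marginal $\leq\Dominating$, and its total mass is $m$), and $\Cost[2]{\Coupling[t]}=(1-t)\Cost[2]{\Coupling[0]}+t\Cost[2]{\Coupling[1]}$, giving the upper bound. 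I expect to use this second approach since it sets up the equality analysis directly.

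For the rigidity statement, assume equality holds for all $t\in[0,1]$, equivalently (by convexity) at some interior $t$. Take $\Coupling[0],\Coupling[1]$ optimal as above and form $\Coupling[t]$; by the equality case it is optimal in $\Opt[m]{\Dominating}{\Bary[t]}$. Now invoke the structural results of Figalli: since $\Dominating$ is absolutely continuous, the optimal partial plan $\Coupling[t]$ is induced by a map $\DualMapping[t]$ from $\Bary[t]$ (concentrated on a graph over the $\Bary$-variable), and more importantly, the active submeasure on the $\Dominating$-side and the transport map are \emph{unique}. Since $\Coupling[t]=(1-t)\Coupling[0]+t\Coupling[1]$ is this unique map-induced plan, both $\Coupling[0]$ and $\Coupling[1]$ must be induced by that \emph{same} map $\DualMapping[t]$ on their respective (sub)domains: wherever $\BaryCommonDens>0$, i.e.\ where both $\BaryDens[0]$ and $\BaryDens[1]$ charge mass, the graphs of $\DualMapping[0]$ and $\DualMapping[1]$ must agree with that of $\DualMapping[t]$, which gives~\eqref{eqn: mappings match on pointwise minimum}. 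For~\eqref{eqn: identity on disjoint part}: on $\{\BaryCommonDens<\BaryDens[i]\}$, say $i=0$, the measure $\Bary[0]$ has strictly more mass than $\Bary[t]$ can ``see'' through the common part, so the extra mass of $\Bary[0]$ there contributes to $\Coupling[t]$'s inactive region — and by the defining property of the partial transport problem, mass that is transported zero distance (i.e.\ left in place, matched to itself) is always at least as good, so optimality of $\Coupling[0]$ forces $\DualMapping[0](y)=y$ a.e.\ there; more carefully, one compares $\Coupling[0]$ to the plan that diverts the $\{\BaryCommonDens<\BaryDens[0]\}$-excess to the diagonal and uses a strict-convexity / uniqueness argument to conclude equality of maps, exactly as in Figalli's monotonicity analysis.

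The main obstacle I anticipate is making the ``same unique map'' deduction fully rigorous at the level of measures rather than maps: $\Coupling[0]$ and $\Coupling[1]$ individually need not have first marginal equal to $\Dominating$ (only $\leq\Dominating$), so one must carefully track how the active submeasures of $\Bary[0]$, $\Bary[1]$ and $\Bary[t]$ relate, and argue that the decomposition $\Coupling[t]=(1-t)\Coupling[0]+t\Coupling[1]$ is compatible with the graph structure — i.e.\ that a convex combination of two graph-concentrated measures can itself be graph-concentrated over the common-support region only if the two graphs coincide there. This is where the absolute continuity of $\Dominating$ (hence uniqueness of the optimal partial plan, via Figalli's Proposition~2.2 and Theorem~2.10 in~\cite{Fig10a}) does the real work, and the bookkeeping of the sets $\{\BaryCommonDens>0\}$, $\{\BaryCommonDens<\BaryDens[i]\}$ against the active/inactive regions of each plan will be the technical heart of the argument.
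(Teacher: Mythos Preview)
Your approach to convexity and to~\eqref{eqn: mappings match on pointwise minimum} is essentially the paper's: form $\Coupling[t]=(1-t)\Coupling[0]+t\Coupling[1]$, observe it is admissible and hence optimal in the equality case, and use that $\Coupling[t]$ is concentrated on a single graph over the $y$-variable to force $\DualMapping[0]=\DualMapping[1]$ wherever both $\BaryDens[0],\BaryDens[1]>0$.

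For~\eqref{eqn: identity on disjoint part}, however, your sketch has a real gap. The heuristic that ``the extra mass of $\Bary[0]$ on $\{\BaryCommonDens<\BaryDens[0]\}$ contributes to $\Coupling[t]$'s inactive region'' is not correct: the second marginal of $\Coupling[t]$ is exactly $\Bary[t]$, so there is no inactive region on the $\Bary$-side, and nothing forces that mass to land in the $\Dominating$-inactive region either. Your proposed competitor---divert the $\{\BaryCommonDens<\BaryDens[0]\}$-excess of $\Coupling[0]$ to the diagonal---is not obviously admissible: you have no a priori control ensuring the resulting first marginal stays below $\Dominating$ (the diagonal mass $\BaryDens[0]-\BaryCommonDens$ at $y$ adds to whatever $\Coupling[0]$ already deposits at $x=y$, and this sum may exceed $\DominatingDens(y)$).

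The paper's mechanism is different and more delicate. Having established $\DualMapping[0]=\DualMapping[1]$ on $\{\BaryCommonDens>0\}$, one defines the common coupling $\CommonCoupling:=\RevMappingCoupling[0]_\#\BaryCommon=\RevMappingCoupling[1]_\#\BaryCommon$ and shows (by a symmetric argument with the forward maps $\Mapping[i]$) that its first marginal is exactly $\PairedCommon$, the measure with density $\min\{\PairedDens[0],\PairedDens[1]\}$. One then proves that $(1-t)(\Coupling[0]-\CommonCoupling)$ is optimal in the \emph{partial} transport problem with dominating measure $\Paired[0]-\PairedCommon$, target $(1-t)(\Bary[0]-\BaryCommon)$, and mass $(1-t)(\m-\BaryCommonMass)$; this is done by contradiction, reassembling hypothetical improvements with $\CommonCoupling$ to beat $\Coupling[t]$. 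The point of this setup is that the active first marginal of this plan is $(1-t)(\Paired[0]-\PairedCommon)$, which for $t\in(0,1)$ is \emph{strictly} below the dominator $\Paired[0]-\PairedCommon$ everywhere on its support. Now~\cite[Theorem~2.6]{Fig10a} applies on the entire support and forces the plan onto the diagonal, giving $\DualMapping[0](y)=y$ on $\{\BaryCommonDens<\BaryDens[0]\}$. The scaling by $(1-t)$ is what manufactures the global strict non-saturation needed to invoke Figalli's result; your outline does not contain this step or an equivalent.
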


\begin{proof}
 For $i=0$, $1$ suppose that $\Paired[i]\leq \Dominating$ with total mass $\m$, and $\Coupling[i]\in\Opt{\Paired[i]}{\Bary[i]}$ satisfy $\Cost[2]{\Coupling[i]}=\PartialWassSq{\Dominating}{\Bary[i]}$; in words, $\Coupling[i]$ is optimal in the two marginal partial transport problem \eqref{eqn: OTm}, with marginals $\Dominating$ and $\Bary[i]$, and  $\Paired[i]$ and is its left active submeasure\footnote{Note that the ``$\leq$'' in $\Paired[i]$ is meant to remind the reader that $\Paired[i]$ is a submeasure of $\Dominating$.}.

 For any $t\in[0,1]$, it is clear that for $\Coupling[t]:=(1-t)\Coupling[0]+t\Coupling[1]$ we have
\begin{align*}
 (\Proj{1})_\#\Coupling[t]&=(1-t)\Paired[0]+t\Paired[1]\leq\Dominating,\\
 (\Proj{2})_\#\Coupling[t]&=(1-t)\Bary[0]+t\Bary[1]=\Bary[t],
\end{align*}
thus we easily see that
\begin{align}\label{eqn: convexity}
 \PartialWassSq{\Dominating}{\Bary[t]}\leq \Cost[2]{\Coupling[t]}&=(1-t)\Cost[2]{\Coupling[0]}+t\Cost[2]{\Coupling[1]}\nonumber\\
 &=(1-t)\PartialWassSq{\Dominating}{\Bary[0]}+t\PartialWassSq{\Dominating}{\Bary[1]}.
\end{align}
 
 We now turn to the proof of ~\eqref{eqn: mappings match on pointwise minimum} and ~\eqref{eqn: identity on disjoint part}.  Suppose that there is non-strict convexity, i.e. 
\begin{align*}
 \PartialWassSq{\Dominating}{\Bary[t]}= (1-t)\PartialWassSq{\Dominating}{\Bary[0]}+t\PartialWassSq{\Dominating}{\Bary[1]},\qquad\forall t\in[0, 1].
\end{align*}
We then have equality throughout \eqref{eqn: convexity}; in particular $\PartialWassSq{\Dominating}{\Bary[t]}=\Cost[2]{\Coupling[t]}$ and hence 
\begin{equation}\label{eqn: optimal time t}
\Coupling[t]\in\Opt[m]{\Dominating}{\Bary[t]}. 
\end{equation}
Now note that for $i=0$ and $1$, $\Bary[i]$ and $\Paired[i]$ are absolutely continuous; we denote their densities by $\BaryDens[i]$ and $\PairedDens[i]$ respectively. Also 
we may apply the classical result of Brenier (see~\cite{Bre91}) to see there exist a.e. defined mappings 
\begin{align*}
\Mapping[i]:& \spt{\Paired[i]}\to\spt{\Bary[i]},\\
\DualMapping[i]:& \spt{\Bary[i]}\to\spt{\Paired[i]}
\end{align*}
 such that $\Mapping[i]_\#\Paired[i]=\Bary[i]$, $\MappingCoupling[i]_\#\Paired[i]=\Coupling[i]=\RevMappingCoupling[i]_\#\Bary[i]$, and $\DualMapping[i]=\MappingInv[i]$ a.e. on $\spt{\Bary[i]}$.
 
 We will now show~\eqref{eqn: mappings match on pointwise minimum}. Indeed, by \eqref{eqn: optimal time t}
 for any $t\in[0, 1]$ we have that $\Coupling[t]\in\Opt{\Paired[t]}{\Bary[t]}$, where $\Paired[t]:=(1-t)\Paired[0]+t\Paired[1]$, and in particular, by Brenier's theorem, $\Coupling[1/2]$ is concentrated on the graph of an  a.e. defined mapping  $\DualMapping[1/2]:  \spt{\Bary[1/2]\to\spt{\Paired[1/2]}}$;  this means that for almost every $y \in \spt{\Bary[1/2]}$, there is a \emph{unique} $x$ (namely $x =\DualMapping[1/2](y)$) such that $(x,y) \in \spt\Coupling[1/2]$.  On the other hand,   $\Coupling[1/2]$ is clearly supported on the union of the graphs of  $\DualMapping[0]$ and $\DualMapping[1]$;  therefore, for almost every $y \in \{\BaryCommonDens>0\}$, \emph{both} $(\DualMapping[0](y), y)$ and $(\DualMapping[1](y), y)$ belong to  $\spt\Coupling[1/2]$.  This is possible only if, for almost every $y \in \{\BaryCommonDens>0\}$, we have $\DualMapping[0](y)=\DualMapping[1](y) =\DualMapping[1/2](y)$.  This implies  \eqref{eqn: mappings match on pointwise minimum}.

 
We next work toward~\eqref{eqn: identity on disjoint part}. As a result of~\eqref{eqn: mappings match on pointwise minimum}, we can unambiguously define 
\begin{align*}
 \CommonCoupling:=\RevMappingCoupling[0]_\#\BaryCommon=\RevMappingCoupling[1]_\#\BaryCommon
\end{align*}
with mass $\BaryCommonMass:=\mass{\CommonCoupling}\leq \m$. Here we claim that 
\begin{align*}
( \Proj{1})_\#\CommonCoupling=\PairedCommon
\end{align*}
where $\PairedCommon$ is the absolutely continuous measure with density $\PairedCommonDens:=\min\{\PairedDens[0],\ \PairedDens[1]\}$. To see this, first note it is clear that 
\begin{align*}
( \Proj{1})_\#\CommonCoupling=\DualMapping[0]_\#\BaryCommon=\DualMapping[1]_\#\BaryCommon\leq\PairedCommon.
\end{align*}
 Next, 
 we can apply the arguments leading up to~\eqref{eqn: mappings match on pointwise minimum} with $\Mapping[i]$ replacing $\DualMapping[i]$ to find that for a.e. $x\in\{\PairedCommonDens>0\}$ we have $\Mapping[0](x)=\Mapping[1](x)$, hence 
\begin{align*}
\Mapping[0]_\#\PairedCommon=\Mapping[1]_\#\PairedCommon&\leq \BaryCommon\\
\implies\PairedCommon=\DualMapping[0]_\#\Mapping[0]_\#\PairedCommon&\leq \DualMapping[0]_\#\BaryCommon=( \Proj{1})_\#\CommonCoupling,
\end{align*}
finishing the claim.

Next we claim that for any $t\in[0, 1]$ we have
\begin{align}
 (1-t)(\Coupling[0]-\CommonCoupling)&\in\Opt[(1-t)(\m-\BaryCommonMass)]{\Paired[0]-\PairedCommon}{(1-t)(\Bary[0]-\BaryCommon)},\notag\\
 t(\Coupling[1]-\CommonCoupling)&\in\Opt[t(\m-\BaryCommonMass)]{\Paired[1]-\PairedCommon}{t(\Bary[1]-\BaryCommon)}.\label{eqn: partially optimal}
\end{align}
We will prove the claim by contradiction.  Assuming that it fails, we will construct a measure in $\PartialCouples{\Dominating}{\Bary[t]} $ with total mass $m$ and a lower cost than $\Coupling[t]$, contradicting  $\Coupling[t]\in\Opt[m]{\Dominating}{\Bary[t]}$.

Suppose by contradiction that the claim fails; then there exist
\begin{align*}
 \BarCoupling[0]&\in\Opt[(1-t)(\m-\BaryCommonMass)]{\Paired[0]-\PairedCommon}{(1-t)(\Bary[0]-\BaryCommon)},\\
 \BarCoupling[1]&\in\Opt[t(\m-\BaryCommonMass)]{\Paired[1]-\PairedCommon}{t(\Bary[1]-\BaryCommon)}
\end{align*}
with
\begin{align*}
\Cost[2]{\BarCoupling[0]}+\Cost[2]{\BarCoupling[1]}&<\Cost[2]{(1-t)(\Coupling[0]-\CommonCoupling)}+\Cost[2]{t(\Coupling[1]-\CommonCoupling)}\\
&=\Cost[2]{\Coupling[t]-\CommonCoupling},
\end{align*}
(where we have used linearity of $\Cost[2]{\cdot}$), which then implies
\begin{equation}
\Cost[2]{\BarCoupling[0]+\BarCoupling[1]+\CommonCoupling} < \Cost[2]{\Coupling[t] }\label{eqn: strictly smaller}.
\end{equation}
Now note that 
\begin{align*}
 \mass{\BarCoupling[0]+\BarCoupling[1]+\CommonCoupling}=(1-t)(\m-\BaryCommonMass)+t(\m-\BaryCommonMass)+\BaryCommonMass=\m.
\end{align*}
Also,
\begin{align*}
 (\Proj{1})_\#(\BarCoupling[0]+\BarCoupling[1]+\CommonCoupling)&\leq (\Paired[0]-\PairedCommon)+(\Paired[1]-\PairedCommon)+\PairedCommon\\
 &=\Paired[0]+\Paired[1]-\PairedCommon\\
 &=(\PairedDens[0]+\PairedDens[1]-\PairedCommonDens)dx\\
 &=\max{\{\PairedDens[0],\PairedDens[1]\}}dx\\
 &\leq \Dominating.
 \end{align*}
On the other hand, the second marginal satisfies
\begin{align*}
 (\Proj{2})_\#(\BarCoupling[0]+\BarCoupling[1]+\CommonCoupling)=(1-t)(\Bary[0]-\BaryCommon)+t(\Bary[1]-\BaryCommon)+\BaryCommon=\Bary[t],
 \end{align*}
 hence combined with~\eqref{eqn: strictly smaller} this would contradict that $\Coupling[t]\in\Opt[m]{\Dominating}{\Bary[t]}$, and we obtain the claim \eqref{eqn: partially optimal}.
 
 Finally for $t \in(0,1)$, note that $(1-t)(\PairedDens[0]-\PairedCommonDens) < \PairedDens[0]-\PairedCommonDens$ wherever $\PairedDens[0]-\PairedCommonDens >0$ (and thus a.e. on $\spt(\Paired[0]-\PairedCommon)$). Since $(\Proj{1})_\#\left((1-t)(\Coupling[0]-\CommonCoupling)\right)=(1-t)(\PairedDens[0]-\PairedCommonDens)dx$, 
%
%
%
%
by combining~\cite[Theorem 2.6 (2.5)]{Fig10a} with~\eqref{eqn: partially optimal} we see that $(1-t)(\Coupling[0]-\CommonCoupling)$  is supported on the diagonal $\{x=y\}\subseteq \mathbb{R}^{2n}$; this immediately implies that $\Coupling[0]-\CommonCoupling=\RevMappingCoupling[0]_\#(\Bary[0]-\BaryCommon)$ is as well. This means that for $(\Bary[0]-\BaryCommon)$-a.e. $y$ we must have $\DualMapping[0](y)=y$, and with a symmetric argument applied to $\Coupling[1]-\CommonCoupling$ we obtain  ~\eqref{eqn: identity on disjoint part}.
 \end{proof}
%
\subsection{Proof of uniqueness}
Before presenting the proof of Theorem~\ref{thm: partial barycenter is unique}, we remark that the argument contains a fair bit of notation.  To  make the proof more accessible, we provide in an appendix (Appendix \ref{sect: notation}) a summary of the main notation.  It might be convenient for the reader to keep the summary handy while following the proof.  In addition, several technical aspects of the argument, identified as claims within the body of the proof, have been deferred to another appendix (Appendix \ref{sect: proof of claims}), to avoid interrupting the main flow of the argument.

We are now ready to prove Theorem~\ref{thm: partial barycenter is unique}.
\begin{proof}[{Proof of Theorem~\ref{thm: partial barycenter is unique}}]
Fix $\m$ as in the statement of the theorem, and suppose by contradiction that $\Bary[0]\neq\Bary[1]$ are both minimizers in~\eqref{eqn: BCm}; again by Lemma~\ref{lem: absolute continuity of BCm minimizers} we  have that both $\Bary[0]$, $\Bary[1]\in\Constraint$. We will construct a $\BaryBar\in\Constraint$ which achieves a lower value than the minimal value attained by $\Bary[0]$ and $\Bary[1]$ in~\eqref{eqn: BCm}, contradicting their minimality.

Since each summand in $\Func$ is convex under linear interpolation by Lemma~\ref{lem: convexity claims}, so is $\Func$. In particular, as $\Functional{\Bary[0]}=\Functional{\Bary[1]}$ and both achieve the minimum value, we see that $\Functional{\Bary[t]}=(1-t)\Functional{\Bary[0]}+t\Functional{\Bary[1]}$ for all $t\in[0, 1]$ with $\Bary[t]$ defined as in Lemma~\ref{lem: convexity claims}, which in turn implies 
\begin{align*}
 \PartialWassSq{\Dominating[j]}{\Bary[t]}= (1-t)\PartialWassSq{\Dominating[j]}{\Bary[0]}+t\PartialWassSq{\Dominating[j]}{\Bary[1]}
\end{align*}
 for all $t\in[0,1]$ and $1\leq j\leq \Number$. For each $i=0$, $1$ and $1\leq j\leq \Number$ we again obtain an a.e. defined collection of optimal partial transport maps $\MultiDualMapping[i]{j}: \spt{\Bary[i]}\to \spt{\Dominating[j]}$ such that $\RevMultiMappingCoupling[i]{j}_\#\Bary[i]\in\Opt[\m]{\Dominating[j]}{\Bary[i]}$, and $\PairedMulti[i]{j}:=\Paren{\MultiDualMapping[i]{j}}_\#\Bary[i]\leq \Dominating[j]$. We will extend each $\MultiDualMapping[i]{j}$ to all of $\R^n$ by taking it to be the identity mapping where it is not defined (in particular, on $\R^n\setminus{\spt\Bary[i]}$). With this extension, by using Lemma~\ref{lem: convexity claims} \eqref{eqn: mappings match on pointwise minimum} and \eqref{eqn: identity on disjoint part} we can see that for every $1\leq j\leq \Number$,
\begin{align*}
 \MultiDualMapping[0]{j}(y)=\MultiDualMapping[1]{j}(y),\ \text{a.e. }y\in\R^n,
\end{align*}
 and additionally, 
 \begin{align}\label{eqn: identity map}
 \MultiDualMapping[i]{j}(y)= y,\ \text{a.e. }y\in\StaySet\cup\{\BaryDens[i]=0\}
 \end{align}
 where
%
\begin{align*}
 \StaySet:=\{y\in\R^n\mid \BaryDens[0](y)\neq\BaryDens[1](y)\},
\end{align*}
and $\BaryDens[i]$ is the density of $\Bary[i]$. We also note here that $\MultiDualMapping[i]{j}$ is injective a.e. on $\spt{\Bary[i]}$ by the Brenier-McCann theorem \cite{Bre87,Bre91, McC95} (as  $\Dominating[j]$ is absolutely continuous).  Finally, it is easy to see that the inverse image of any set under $\MultiDualMapping[i]{j}$ differs from its inverse image under the map before extension only by a $\Bary[i]$-null set, in particular we still have $\PairedMulti[i]{j}=\Paren{\MultiDualMapping[i]{j}}_\#\Bary[i]$ after this extension is made.

By these observations,  for any measurable $E\subset \StaySet$ we have (up to negligible sets) $E\subset \MultiDualMappingInv[i]{j}(E)$. Hence,
\begin{align*}
 \Dominating[j](E)&\geq \Paren{\MultiDualMapping[i]{j}}_\#\Bary[i](E)\\
 &=\Bary[i](\MultiDualMappingInv[i]{j}(E))\geq \Bary[i](E).
\end{align*}
Therefore,

$ \BaryDens[i] \leq \DominatingDens[j]$ a.e. on $\StaySet$, for each $j=1,2,...,N$, and so

\begin{align}\label{eqn: barycenter dominated by pointwise minimum on stay set}
 \BaryDens[i]\leq\DominatingCommonDens
\end{align}
a.e. on $\StaySet$.

Now, by the assumption $\mass{\Bary[i]}=\m>\mass{\DominatingCommon}$, there exists a set $\MinSmaller$ of strictly positive Lebesgue measure on which $\DominatingCommonDens<\BaryDens[i]$; by~\eqref{eqn: barycenter dominated by pointwise minimum on stay set} we must have $\MinSmaller\subset\{\BaryDens[i]>0\}\setminus\StaySet$,  after possibly discarding a null set, and as a result we can assume $\DominatingCommonDens<\BaryDens[0]=\BaryDens[1]$ on $\MinSmaller$.

We now make the following claim:

\textbf{Claim 1:} \textit{for either $i=0$ or $1$, there exists a Borel set $\MinBigger\subset\StaySet$ with strictly positive Lebesgue measure on which $\DominatingCommonDens>\BaryDens[i]$, which also satisfies
\begin{align}
 \int_{\MinSmaller}(\BaryDens[i]-\DominatingCommonDens)dx &=\int_{\MinBigger}(\DominatingCommonDens-\BaryDens[i])dx>0\label{eqn: bigger smaller sets}
\end{align}
and
\begin{align}\label{eqn: doesn't get mass from elsewhere}
 \Leb{\MultiDualMapping[i]{j}(\R^n\setminus\MinBigger)\cap\MinBigger}=0
\end{align}
for all $1\leq j\leq\Number$. }

To avoid interrupting the main flow of the argument, the proof of Claim 1 is deferred to Appendix \ref{sect: proof of claims}.

Now, note that clearly $ \MinSmaller\cap\MinBigger=\emptyset$. Let us define 
\begin{align*}
 \BaryBar:=\Bary[i]+(\DominatingCommonDens-\BaryDens[i])\Indicator_{\MinSmaller\cup\MinBigger}dx,
\end{align*}
which is a positive measure.
It is clear $\BaryBar$ is absolutely continuous, and by~\eqref{eqn: bigger smaller sets} and  the disjointness of $\MinSmaller$ and $\MinBigger$, we have
\begin{align*}
 \mass{\BaryBar}&=\mass{\Bary[i]}+\int_{\MinBigger}(\DominatingCommonDens-\BaryDens[i])dx-\int_{\MinSmaller}(\BaryDens[i]-\DominatingCommonDens)dx\\
 &=\m,
\end{align*}
i.e. $\BaryBar\in\Constraint$.

Next we claim the following (the proof can be found in   Appendix \ref{sect: proof of claims}).

\textbf{Claim 2:}\textit{ $\Paren{\MultiDualMapping[i]{j}}_\#{\BaryBar}\leq\Dominating[j]$ for $1\leq j\leq \Number$. }

 In particular, this shows that for each $i$ and $j$, the measure $\RevMultiMappingCoupling[i]{j}_\#\BaryBar$ is an admissible competitor in $\PartialWassSq{\Dominating[j]}{\BaryBar}$.
 
 Finally, we make one more claim (again proved in  Appendix \ref{sect: proof of claims}).

\textbf{Claim 3:}
\textit{
\begin{align}\label{eqn: new map too cheap}
 \sum_{j=1}^\Number{\Cost[2]{\RevMultiMappingCoupling[i]{j}_\#\BaryBar}}<\sum_{j=1}^\Number{\Cost[2]{\RevMultiMappingCoupling[i]{j}_\#\Bary[i]}}.
\end{align}
}

Since $\Functional{\BaryBar}\leq\sum_{j=1}^\Number{\Cost[2]{\RevMultiMappingCoupling[i]{j}_\#\BaryBar}}$ and $\Functional{\Bary[i]}=\sum_{j=1}^\Number{\Cost[2]{\RevMultiMappingCoupling[i]{j}_\#\Bary[i]}}$, this last claim contradicts the fact that $\Bary[i]$ is a minimizer in~\eqref{eqn: BCm}, and finishes the proof.

\end{proof}

\section{Other properties of the multi-marginal optimal partial transport problem and the partial barycenter problem}\label{sec: counterexamples}
In this section, we will discuss other analytic properties of minimizers in~\eqref{eqn: MMm}. We begin with a counterexample to the ``monotonicity property'', in contrast to the two marginal case of~\eqref{eqn: OTm} (see~\cite[Theorem 3.4]{CM10} and~\cite[Remark 3.4]{Fig10a}).
\begin{prop}\label{prop: nonmonotone}(\textbf{Partial barycenters and active submarginals may not depend monotontically on $m$.})
There exist measures $\Dominating[1], \Dominating[2],\Dominating[3]$ on $\mathbb{R}$ 
for which:
\begin{enumerate}
 \item The mapping $m \mapsto\Bary[\m]$, where $\Bary[\m]$ is the minimizer of $\sum_{j=1}^3\PartialWassSq{\Dominating[j]}{\Bary}$ over $\Constraint$ is not monotone, in the sense that there exists  $0<m < \mbar < \min_{j=1,2,3}\mass{\Dominating[j]}$ for which $\Bary[\m]$ is not a submeasure of $\Bary[\mbar]$.
\item The mapping $m \mapsto \PairedMulti[\m]{3}:=\Paren{\Proj{3}}_\#\Paren{\MMCoupling[m]}$, where  $\MMCoupling[\m]\in \OptMulti[m]{\Dominating[1], \Dominating[2], \Dominating[3]}$, is not monotone, in the sense that there exists  $0<m < \mbar < \min_{j=1,2,3}\mass{\Dominating[j]}$ for which $\PairedMulti[\m]{3}$ is not a submeasure of $\PairedMulti[\mbar]{3}$. 

\end{enumerate}

\end{prop}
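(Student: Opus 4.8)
The plan is to construct an explicit example on $\mathbb{R}$ with three marginals which are (sums of) indicator densities supported on well-separated intervals, chosen so that the ``cheapest'' mass to transport changes qualitatively as $m$ increases. The guiding intuition: for small $m$, the optimal partial barycenter will concentrate the active submeasures near a cluster of two of the marginals that are close together (say $\Dominating[1]$ and $\Dominating[2]$ sit near the origin while $\Dominating[3]$ is far away), since moving a little mass there is cheap; but once $m$ exceeds the total mass available in that cheap cluster, the optimizer is forced to activate mass in a region that is geometrically incompatible with the previous choice, so that the new active submeasure (and the new barycenter) is \emph{not} a superset of the old one. First I would fix concrete numerical choices for the intervals, their masses, and the two thresholds $0<m<\mbar<\min_j\mass{\Dominating[j]}$, so that the relevant comparisons reduce to one-dimensional optimizations that can be solved by hand.

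The key steps, in order: (1) Write down the candidate marginals $\Dominating[1],\Dominating[2],\Dominating[3]$ explicitly (densities that are finite sums of indicators of intervals), and verify $\mass{\DominatingCommon}$ is small enough that there is a genuine range of $m$ with a unique minimizer — here I invoke Theorem~\ref{thm: partial barycenter is unique} to know the minimizer $\Bary[m]$ is well-defined and unique for $m\geq\mass{\DominatingCommon}$, and Proposition~\ref{prop: BCm=MMm} to transfer conclusions between $\Bary[m]$ and $\MMCoupling[m]$. (2) For $m$ small, identify $\Bary[m]$ exactly: using the characterization that $\Bary[m]$ is optimal in the barycenter problem~\eqref{eqn: BC} for its own active submeasures, together with the fact that on $\mathbb{R}$ optimal transport is monotone rearrangement, reduce to checking that the proposed active configuration beats all competitors; the one-dimensional structure makes $\PartialWassSq{\Dominating[j]}{\Bary}$ computable in closed form. (3) Do the same for $m=\mbar$, obtaining a $\Bary[\mbar]$ whose support (or whose density on some interval) is strictly smaller than that of $\Bary[m]$ on a set of positive measure, giving (1). (4) For part (2), push the barycenter computation forward through the optimal maps $\MultiDualMapping[\Bary]{3}$ as in Proposition~\ref{prop: BCm=MMm} to read off $\PairedMulti[m]{3}$ and $\PairedMulti[\mbar]{3}$, and check the non-inclusion directly; alternatively, choose the geometry so that the third marginal's active part is literally supported on disjoint intervals for the two values of $m$.

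I expect the main obstacle to be step (2)--(3): rigorously certifying that the guessed configurations are the \emph{global} minimizers, rather than merely plausible local ones. The clean way around this is to exploit one dimension heavily — any optimal partial transport plan between measures on $\mathbb{R}$ is given by monotone rearrangement of the active submeasures (cf.\ \cite{Fig10a}), so the free parameters are just the active submeasure of each $\Dominating[j]$, and by the barycenter characterization these are all determined by a single active submeasure of $\Bary$; the problem collapses to minimizing an explicit convex functional of a density on $\mathbb{R}$ subject to $0\le f\le$ (appropriate bound) and $\int f=m$. With indicator-type marginals this becomes a finite-dimensional (indeed piecewise-linear/quadratic) optimization whose solution can be pinned down by first-order conditions, making the verification routine once the numbers are chosen well. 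A secondary subtlety is ensuring strict inequalities (so that ``not a submeasure'' is robust), which I would handle by leaving a small slack parameter in the construction and taking it small enough at the end.
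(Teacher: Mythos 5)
Your overall methodology is sound (explicit one-dimensional example, monotone rearrangement for each two-marginal partial transport, reducing to a finite-dimensional optimization of an explicit functional), but the specific design principle you propose — ``concentrate near a cheap cluster of two marginals, and non-monotonicity appears once the cluster is exhausted'' — does not actually produce a working example, and is not what drives the paper's construction. The issue is that in the functional $\sum_{j=1}^3\PartialWassSq{\Dominating[j]}{\Bary}$ \emph{each} summand must couple $\Bary$ with an active submeasure of $\Dominating[j]$ of total mass $m$; there is no option to ``only use'' $\Dominating[1]$ and $\Dominating[2]$ for small $m$ and ignore a faraway $\Dominating[3]$. If all three marginals are nice indicator densities on intervals, the active submeasures are obtained by monotone rearrangement and grow monotonically with $m$, and so does the barycenter --- your geometry gives no mechanism for non-monotonicity, so steps (2)--(3) of your plan would stall.

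The paper's example relies on a more delicate structural feature. It takes $\Dominating[1]$ uniform on $[0,1]$ and gives $\Dominating[2]$ (supported on $[2,3]$) a two-piece density: a tall ``spike'' of height $1/\epsilon$ over $[2,2+\epsilon/2]$ followed by a plateau of height $1$. For $\tfrac12<m<1$, the active part of $\Dominating[1]$ is the rightmost interval $[1-m,1]$ and the active part of $\Dominating[2]$ is the leftmost mass $m$ of $\Dominating[2]$; the resulting monotone map $\MultiMapping[\m]{}$ is a contraction of factor $\epsilon$ onto the spike followed by a translation onto the plateau. The pushforward of $\Dominating[1]\lfloor_{[1-m,1]}$ by $\tfrac12\Paren{\Id + \MultiMapping[\m]{}}$ therefore has a two-level density (heights $\tfrac{2}{\epsilon+1}$ and $1$), and the crucial point is that the location of the jump is $\tfrac{7+\epsilon}{4}-\tfrac{m}{2}$, which moves \emph{left} as $m$ increases. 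This is exactly why $\Bary[\m]\not\leq\Bary[\mbar]$ for $m<\mbar$: on a small interval between the two jump locations the density of $\Bary[\m]$ exceeds that of $\Bary[\mbar]$. The third marginal is then chosen as uniform on $[1,2]$ with a density large enough ($>\tfrac{2}{\epsilon+1}$) to dominate every $\Bary[\m]$, so that $\PartialWassSq{\Dominating[3]}{\Bary[\m]}=0$; this both certifies that $\Bary[\m]$ is the genuine minimizer of the three-marginal sum and makes the active submeasure of $\Dominating[3]$ equal to $\Bary[\m]$, delivering part (2) for free. So the missing ideas in your plan are (i) a marginal with a spike-plus-plateau density, which is what makes the barycenter's \emph{shape}, not just its support, depend on $m$, and (ii) the ``dummy'' dominating third marginal that trivializes the third cost and links parts (1) and (2). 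Without (i) you have no mechanism for non-monotonicity, and without (ii) you have no clean way to certify global optimality of a guessed $\Bary[\m]$ for the full three-marginal functional.
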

In fact, as we will see in the proof below, even more is true; the barycenter of the first two measures does not depend monotonically on $m$.

It may be helpful to see Figures~\ref{figure: continuous dominating measures} and~\ref{figure: continuous active measures} below while following the proof.
\begin{figure}[h]
  \centering
    \includegraphics[width=\textwidth]{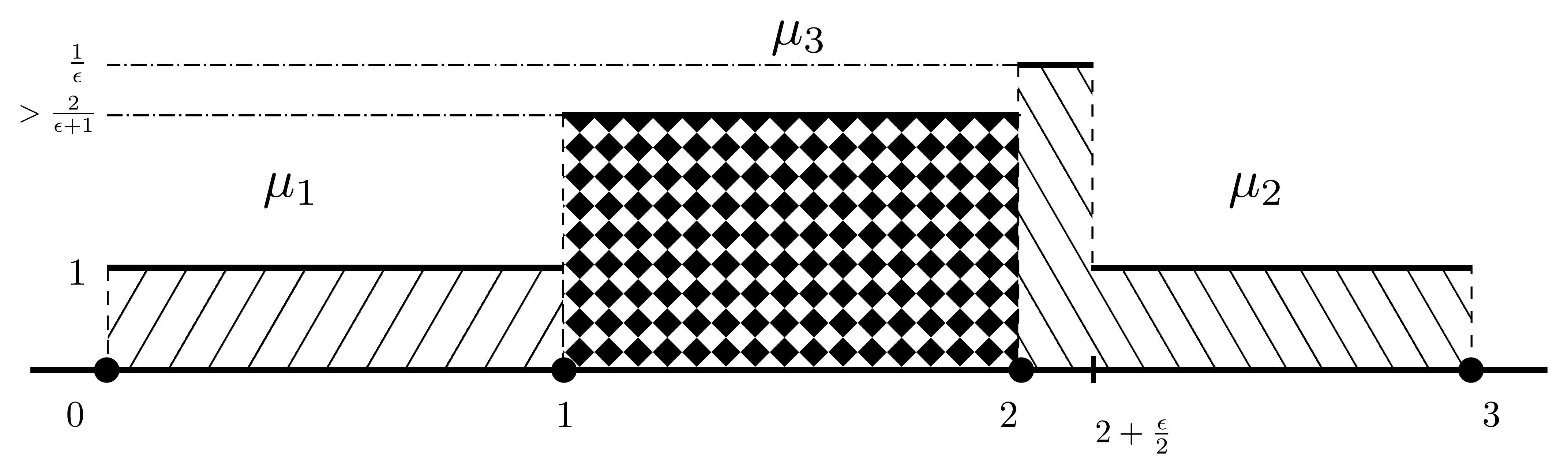}
     \caption{Here we have shown graphically the three dominating measures $\Dominating[1]$, $\Dominating[2]$, and $\Dominating[3]$ in Proposition~\ref{prop: nonmonotone}.}\label{figure: continuous dominating measures}
  \centering
    \includegraphics[width=\textwidth]{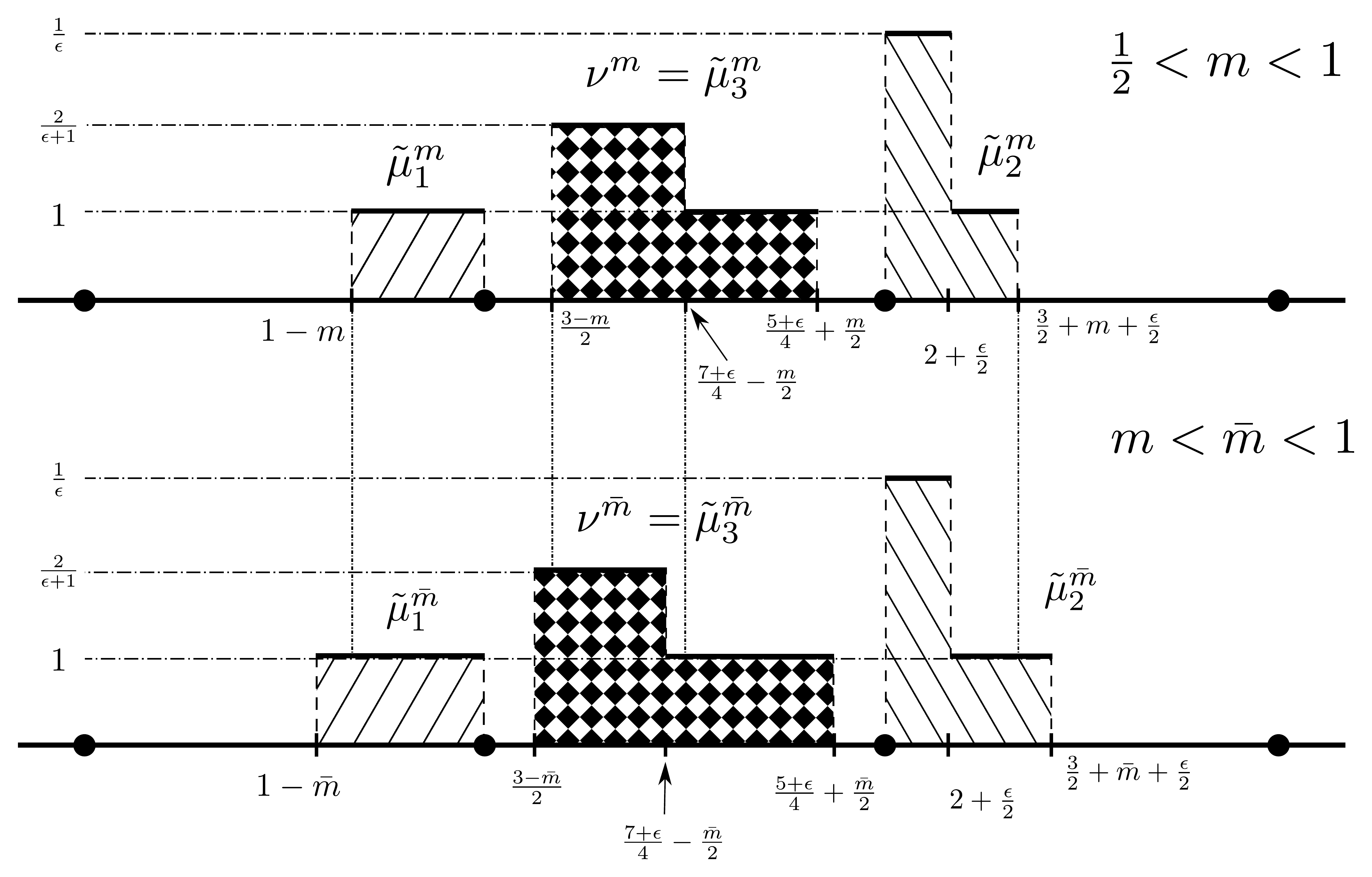}
     \caption{This figure illustrates the active submeasures in Proposition~\ref{prop: nonmonotone} for two different values $\frac{1}{2}<\m <\mbar<1$. Note that in each case, the partial barycenter is equal to the third active submeasure, which is not monotone  in $\m$ (as demonstrated by the dashed vertical lines).}\label{figure: continuous active measures}
\end{figure}
\begin{proof}
Take $\Dominating[1]$ to be uniform measure on $[0,1]$ with density $1$ and take $\Dominating[2]$ to be absolutely continuous, supported on $[2,3]$ with density given by
\begin{align*}
\DominatingDens[2](x) &=
\begin{cases}
 \frac{1}{\epsilon} &\text{ on }[2, 2+ \frac{\epsilon}{2}],\\
 1 &\text{ on }[2+ \frac{\epsilon}{2},3].
 \end{cases}
\end{align*}

First, consider the optimal partial transport problem between the two marginals $\Dominating[1]$ and $\Dominating[2]$.  For $\frac{1}{2}<\m<1$, it is an easy consequence of \cite[Corollary 2.4]{CM10} combined with \cite[Theorem 2.6]{Fig10a} that if $\TwoCoupling[m]\in\Opt[m]{\Dominating[1]}{\Dominating[2]}$ then the active submeasures  $\PairedMulti[\m]{1}:=\Paren{\Proj{1}}_\#\TwoCoupling[m]$ and $\PairedMulti[\m]{2}:=\Paren{\Proj{2}}_\#\TwoCoupling[\m]$ are the measures $\Dominating[1]$ and $\Dominating[2]$, restricted to the intervals $[1-m,1]$ and $[2,\frac{3}{2}+\frac{\epsilon}{2}+m]$, respectively (these are the ``right most'' piece of the first measure and the ``left most'' piece of the second).  In addition we can write $\TwoCoupling[m]=\MultiMappingCoupling[\m]{}_\#\PairedMulti[\m]{1}$, where the optimal map $\MultiMapping[\m]{}$ between the two active submeasures is the unique increasing map pushing $\PairedMulti[\m]{1}$ forward to $\PairedMulti[\m]{2}$, given by:

\begin{align*}
\MultiMapping[\m]{}(x) &=
\begin{cases}
 \epsilon x+2-\epsilon(1-m)& \text{ on }[1-m, \frac{3}{2}-m],\\
x+\frac{1+\epsilon}{2}+m& \text{ on }[\frac{3}{2}-m,1].
\end{cases}
\end{align*}
By Proposition~\ref{prop: BCm=MMm}, the partial barycenter $\Bary[\m]:=\Paren{\frac{x+\MultiMapping[\m]{}(x)}{2}}_\#\PairedMulti[\m]{1}$ minimizes $\sum_{j=1}^2\PartialWassSq{\Dominating[j]}{\Bary}$ over $\Constraint$; thus $\Bary[\m]$ is supported on $[\frac{3-m}{2}, \frac{5+\epsilon}{4} +\frac{m}{2}]$, with a density given by
\begin{align*}
\BaryDens[\m](x) &=
\begin{cases}
 \frac{2}{\epsilon+1}& \text{ on }[\frac{3-m}{2},\frac{7+\epsilon}{4}-\frac{m}{2}],\\
 1& \text{ on }[\frac{7+\epsilon}{4}-\frac{m}{2},\frac{5+\epsilon}{4}+\frac{m}{2}].
 \end{cases}
\end{align*}
In particular, note that the partial barycenter of $\Dominating[1]$ and $\Dominating[2]$ is not monotone  with respect to the parameter $\m$; the location of the jump in $\BaryDens[\m]$ moves to the left as $\m$ increases, hence $\Bary[\m]\not\leq\Bary[\mbar]$ when $\frac{1}{2}<\m < \mbar<1$.

Now, take $\Dominating[3]$ to be uniform on $[1,2]$, with density $\DominatingDens[3]>\frac{2}{\epsilon+1}$.  Then each $\nu^m\leq\Dominating[3]$, and it is straightforward to see that $\Bary[\m]$ minimizes $\Bary\mapsto\sum_{j=1}^3\PartialWassSq{\Dominating[j]}{\Bary}$ over $\Constraint$, 
(as $\Bary[\m]$ minimizes $\Bary \mapsto \PartialWassSq{\Dominating[1]}{\Bary}+\PartialWassSq{\Dominating[2]}{\Bary}$ while $ \PartialWassSq{\Dominating[3]}{\Bary[m]}=0$, it clearly minimizes their sum).

As the active submeasure $\PairedMulti[\m]{3}$ corresponding to $\Dominating[3]$ is precisely $\Bary[\m]$, this shows that the active submeasures are not monotone  with respect to $\m$ either.
\end{proof}

\begin{rmk}\label{classcouplingnotoptimal} The example in the preceding proof also implies that the naive multi-marginal analogue of \cite[Proposition 2.4]{Fig10a} fails.  

The analogous statement would be the following: if $\MMCoupling$ solves~\eqref{eqn: MMm} with marginals $\Dominating[1], \ldots, \Dominating[\Number]$ and $\PairedMulti{j}:=\Paren{\Proj{j}}_\#\MMCoupling$, then 
\begin{align*}
 \BarMMCoupling:=\MMCoupling+\Paren{\bigotimes_{j=1}^\Number\Id}_\#\Paren{\sum_{j=1}^\Number(\Dominating[j]-\PairedMulti{j})}
\end{align*}
solves~\eqref{eqn: MM} with marginals $\Dominating[1]+\sum_{j\neq 1}(\Dominating[j]-\PairedMulti{j}),\ldots, \Dominating[\Number]+\sum_{j\neq \Number}(\Dominating[j]-\PairedMulti{j})$. 

However, we can see that already in the case $\Number=3$, this statement does not hold for the example given above in Proposition~\ref{prop: nonmonotone}. Indeed, note that for $x \in[\frac{7+\epsilon}{4}-\frac{m}{2},\frac{5+\epsilon}{4}+\frac{m}{2}]$ we have $\PairedMultiDens{3}(x) =1 < \frac{2}{\epsilon+1}< \DominatingDens[3](x)$;  therefore the density $\DominatingDens[3]-\PairedMultiDens{3}$ of $\Dominating[3] - \PairedMulti{3}$ is strictly positive on this interval. As a result, for each $x_3 \in [\frac{7+\epsilon}{4}-\frac{m}{2},\frac{5+\epsilon}{4}+\frac{m}{2}]$ the support of $\BarMMCoupling$  includes $(x_3,x_3,x_3)$   (via  $\Paren{\Id \times \Id \times \Id}_\#\Paren{\Dominating[3] - \PairedMulti{3}}$) as well as points of the form 
$(x_1,x_2,x_3)$, where $x_j \in spt(\PairedMultiDens{j})$ for $j=1$, $2$ (via $\MMCoupling$).
In particular, $ \BarMMCoupling$  is not concentrated on a graph over the third marginal, and so, by \cite[Theorem 2.1]{GS98} $\BarMMCoupling$ cannot be optimal in~\eqref{eqn: MM}.

As Proposition 2.4 of \cite{Fig10a} plays a key role in Figalli's proof of Theorem 2.10 there, this indicates that a direct application of the techniques in \cite{Fig10a} does not translate to the multi-marginal case.

\end{rmk}

\
The next example shows that, in contrast to a result of Caffarelli and McCann in the $\Number=2$ case, monotonicity of the active submeasures  with respect to $\m$ can fail even for discrete measures (compare~\cite[Proposition 3.1]{CM10}). 
\begin{figure}[h]
  \centering
    \includegraphics[width=.9\textwidth]{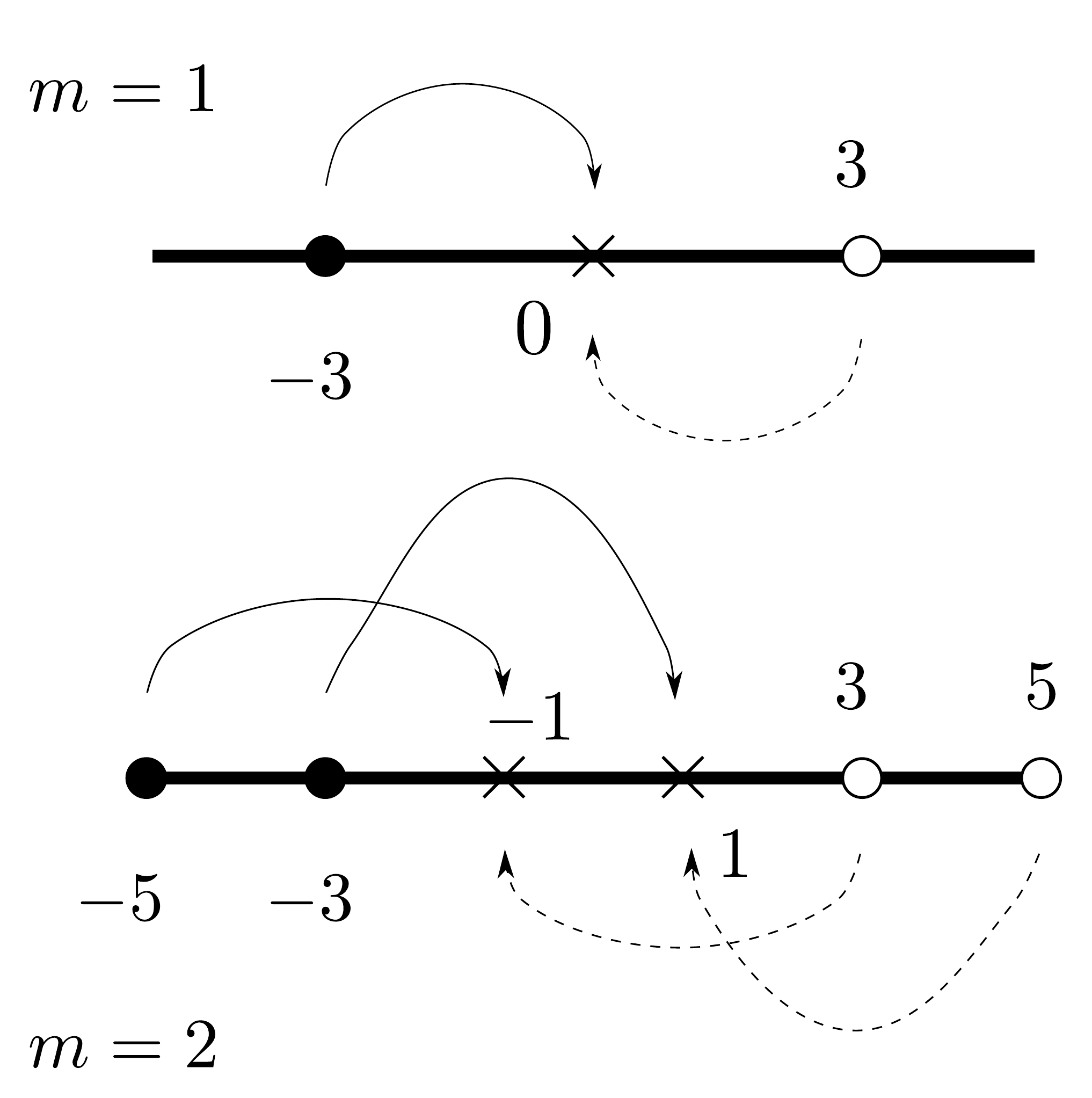}
     \caption{This figure illustrates the active submeasures in Example~\ref{ex: deltas} for $m=1$ and $m=2$. Filled in dots represent the support of the active submeasure of $\Dominating[1]$, empty dots the active submeasure of $\Dominating[2]$, and the crosses are the active submeasure of $\Dominating[3]$. Each submeasure is a sum of unit Dirac measures supported at the various points. Again in each case, the partial barycenter (whose couplings are illustrated by the solid and dotted arrows) is equal to the third active submeasure, which fails to be monotone  with respect to $\m$ (in fact, the support itself does not grow monotonically in $\m$).}\label{figure: discrete}
\end{figure}
\begin{ex}\label{ex: deltas}
Consider the real line $\R$ and take
\begin{align*}
\Dominating[1]&=\delta_{-5}+\delta_{-3},\\
\Dominating[2] &= \delta_{-1}+\delta_{0}+\delta_{1},\\
\Dominating[3] &=\delta_{3}+\delta_{5}.
\end{align*}
Taking $\m=1$, it is easy to see that the optimizer is $\delta_{(-3, 0, 3)}$ which couples the Dirac masses at $-3$, $0$, and $3$, while for $\m=2$, the optimizer is $\delta_{(-5, -1, 3)}+\delta_{(-3, 1, 5)}$ which couples the masses at $-5$, $-1$, and $3$; and $-3$, $1$, and $5$ respectively.  This shows that even the \emph{support} of the active submeasure of $\Dominating[2]$  may not grow monotonically with respect to $\m$.

 An example with absolutely continuous measures where the supports of the active submeasures are not monotone can be constructed by replacing the Dirac masses with uniform measure on small disjoint intervals; however it is not clear to us whether an example can be constructed in which the marginals are absolutely continuous with connected supports. 
\end{ex}

As the above two examples illustrate, when there are three or more marginals in~\eqref{eqn: MMm}, an optimal coupling may move mass away from a location where the active submeasure does not saturate the dominating measure: in Proposition~\ref{prop: nonmonotone} above, on $\spt{\Dominating[3]}$ we have $\PairedMultiDens{3}<\DominatingDens[3]$ (where $\PairedMultiDens{3}$ is the density of $\PairedMulti{3}$), yet under the optimal coupling \emph{none} of the mass of $\PairedMulti{3}$ remains in place. At first glance, this seems to be a sharp distinction from the two marginal case,~\cite[Theorem 2.6]{Fig10a}, however we now show there is an appropriate analogous statement for the multi-marginal case, in the form of Corollary~\ref{cor: average characterization}.

\begin{prop}\label{prop: massfilling}
 Suppose that $\MMCoupling\in\OptMulti[m]{\Dominating[1],\ldots,\Dominating[\Number]}$, and $\DominatingDens[j]$ is the density of $\Dominating[j]$. Then for a.e. $x_j$ in $\{\PairedMultiDens{j}<\DominatingDens[j]\}$, where $\PairedMultiDens{j}$ is the density of $\Paren{\Proj{j}}_\#\MMCoupling$,
\begin{align*}
 x_j=\frac{1}{\Number-1}\sum_{k\neq j}x_k
\end{align*}
where $\{x_k\}_{k \neq j}$ is the unique collection of points such that $\Paren{x_1,\ldots, x_\Number}\in\spt{\MMCoupling}$.
\end{prop}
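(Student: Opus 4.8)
The plan is to exploit the equivalence between \eqref{eqn: MMm} and \eqref{eqn: BCm} from Proposition~\ref{prop: BCm=MMm} together with the two-marginal result \cite[Theorem 2.6]{Fig10a}. First I would recall that since $\MMCoupling\in\OptMulti[m]{\Dominating[1],\ldots,\Dominating[\Number]}$, the measure $\Bary:=\Avg_\#\MMCoupling$ is a minimizer in \eqref{eqn: BCm}, and moreover (by the second bullet point in the proof of Proposition~\ref{prop: BCm=MMm}) for each $j$ the measure $\Paren{\Proj{j}\times\Avg}_\#\MMCoupling$ is optimal in the two-marginal partial transport problem \eqref{eqn: OTm} between $\Dominating[j]$ and $\Bary$, moving mass $m=\mass{\Bary}$. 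The key observation is that the active submeasure on the $\Bary$-side of this two-marginal problem is all of $\Bary$ itself (its whole mass is $m$), so the map $\MultiDualMapping[\Bary]{j}$ with $\RevMultiMappingCoupling[\Bary]{j}_\#\Bary\in\Opt[m]{\Dominating[j]}{\Bary}$ is well-defined $\Bary$-a.e., and by uniqueness of optimal couplings in \eqref{eqn: MM} (Gangbo--{\'S}wi{\c{e}}ch) one has $\MMCoupling=\Paren{\MultiDualMapping[\Bary]{1},\ldots,\MultiDualMapping[\Bary]{\Number}}_\#\Bary$, so that for $\Bary$-a.e.\ $y$ the unique point $(x_1,\ldots,x_\Number)\in\spt\MMCoupling$ lying over $y=\Avg(x_1,\ldots,x_\Number)$ satisfies $x_k=\MultiDualMapping[\Bary]{k}(y)$.

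Next I would apply \cite[Theorem 2.6]{Fig10a} to the fixed two-marginal problem between $\Dominating[j]$ and $\Bary$. That theorem characterizes the optimal partial transport map: on the region where the active submeasure fails to saturate $\Dominating[j]$ — equivalently, on the set of $x_j$ with $\PairedMultiDens{j}(x_j)<\DominatingDens[j](x_j)$, where $\PairedMulti{j}=\Paren{\Proj{j}}_\#\MMCoupling=\Paren{\MultiDualMapping[\Bary]{j}}_\#\Bary$ — the optimal map from $\Dominating[j]$ to $\Bary$ is the identity; dually, this means that for $\Bary$-a.e.\ $y$ whose image $x_j=\MultiDualMapping[\Bary]{j}(y)$ lands in $\{\PairedMultiDens{j}<\DominatingDens[j]\}$ one has $\MultiDualMapping[\Bary]{j}(y)=y$. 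In other words, at such a point, $x_j=y=\Avg(x_1,\ldots,x_\Number)=\frac{1}{\Number}\sum_{k=1}^\Number x_k$. Solving $x_j=\frac{1}{\Number}\sum_k x_k$ for $x_j$ gives exactly $x_j=\frac{1}{\Number-1}\sum_{k\neq j}x_k$, which is the claimed identity. One then transfers the "$\Bary$-a.e.\ $y$" statement to "a.e.\ $x_j$ in $\{\PairedMultiDens{j}<\DominatingDens[j]\}$" using that $\MultiDualMapping[\Bary]{j}$ is $\Bary$-a.e.\ injective (Brenier--McCann, since $\Dominating[j]$ is absolutely continuous) and pushes $\Bary$ forward to $\PairedMulti{j}$, which is absolutely continuous; uniqueness of the collection $\{x_k\}_{k\neq j}$ over a given $x_j$ follows from the graphical structure of $\MMCoupling$ over its first (or any) marginal combined with $\MMCoupling=\Paren{\MultiDualMapping[\Bary]{1},\ldots,\MultiDualMapping[\Bary]{\Number}}_\#\Bary$ and the a.e.\ injectivity of $\MultiDualMapping[\Bary]{j}$.

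I expect the main obstacle to be bookkeeping around null sets and the direction of the pushforwards: the statement in \cite[Theorem 2.6]{Fig10a} is naturally phrased on the $\Dominating[j]$-side (where the active submeasure sits inside $\Dominating[j]$), whereas here I need the conclusion phrased in terms of the position $x_j$ of a point of $\spt\MMCoupling$, reached as the image under $\MultiDualMapping[\Bary]{j}$ of a $\Bary$-generic $y$. Carefully, the set $\{\PairedMultiDens{j}<\DominatingDens[j]\}$ meets $\spt\PairedMulti{j}$ in a set of positive measure only where $\PairedMulti{j}$ lives, and on its complement the statement is vacuous; one must check that the $\Bary$-a.e.\ identity $\MultiDualMapping[\Bary]{j}(y)=y$ on $\MultiDualMappingInv[\Bary]{j}\big(\{\PairedMultiDens{j}<\DominatingDens[j]\}\big)$ descends, via the injective change of variables, to an a.e.\ statement with respect to $\PairedMulti{j}$ (hence with respect to Lebesgue measure on $\{\PairedMultiDens{j}<\DominatingDens[j]\}$ restricted to $\spt\PairedMulti{j}$, which is all that is being asserted). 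The rest is the elementary algebra converting $x_j=\Avg(x_1,\ldots,x_\Number)$ into $x_j=\frac{1}{\Number-1}\sum_{k\neq j}x_k$.
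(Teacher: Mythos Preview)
Your proposal is correct and follows essentially the same route as the paper: reduce via Proposition~\ref{prop: BCm=MMm} to the two-marginal partial transport between $\Dominating[j]$ and $\Bary=\Avg_\#\MMCoupling$, invoke \cite[Theorem~2.6]{Fig10a} on the non-saturated set, and rearrange $x_j=\Avg(x_1,\ldots,x_\Number)$. The only difference is that the paper works directly with the forward map $\MultiMapping{j}$ satisfying $\Paren{\Proj{j}\times\Avg}_\#\MMCoupling=\MultiMappingCoupling{j}_\#\PairedMulti{j}$, so Figalli's conclusion $\MultiMapping{j}(x_j)=x_j$ on $\{\PairedMultiDens{j}<\DominatingDens[j]\}$ is already stated in the $x_j$ variable---this sidesteps the null-set bookkeeping you flag in your last paragraph when transferring from the $\Bary$-side via $\MultiDualMapping[\Bary]{j}$.
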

\begin{proof}
Let us write 
\begin{align*}
\TwoCoupling[j] :&= \Paren{\Proj{j}\times\Avg}_{\#}\MMCoupling,\\
\PairedMulti{j} :&= \Paren{\Proj{j}}_\#\MMCoupling
\end{align*}
(recall the definition of the mapping $\Avg$ is given by~\eqref{eqn: average}). By Proposition~\ref{prop: BCm=MMm}, we have $\TwoCoupling[j]\in \Opt[\m]{\Dominating[j]}{\Avg_\#\MMCoupling}$, thus by the absolute continuity of $\Dominating[j]$ there exists a mapping $\MultiMapping{j}$ defined a.e. on $\spt{\PairedMulti{j}}$ such that $\TwoCoupling[j]=\MultiMappingCoupling{j}_\#\PairedMulti{j}$. By~\cite[Theorem 2.6]{Fig10a}, for a.e. $x_j\in\{\PairedMultiDens{j}<\DominatingDens[j]\}$ we have 
\begin{align}\label{eqn: identity mapping}
\MultiMapping{j}(x_j)=x_j,
\end{align}
 note that for a.e. $x_j\in\spt{\PairedMulti{j}}$, 
\begin{align}\label{eqn: injective mapping}
(x_j, y)\in\spt{\TwoCoupling[j]}\implies y=\MultiMapping{j}(x_j).
\end{align}
 On the other hand, by~\cite[Theorem 2.1, Corollary 2.2]{GS98} for a.e. $x_j$, there exists a unique set of points $\{x_k\}_{k \neq j}$ such that $\Paren{x_1,\ldots, x_\Number}\in\spt{\MMCoupling}$. Thus for a.e. $x_j\in\spt{\PairedMulti{j}}$, we see that
\begin{align*}
\Paren{x_j, \Avg(x_1,\ldots, x_\Number)}&\in \spt{\TwoCoupling[j]}
\end{align*}
which combined with~\eqref{eqn: injective mapping} implies
\begin{align*}
\Avg(x_1,\ldots, x_\Number)=\MultiMapping{j}(x_j).
\end{align*}
Finally, combining this with~\eqref{eqn: identity mapping} we find that for a.e. $x_j\in\{\PairedMultiDens{j}<\DominatingDens[j]\}$, there exists unique $\{x_k\}_{k \neq j}$ such that $\Paren{x_1,\ldots, x_\Number}\in\spt{\MMCoupling}$ and
\begin{align*}
 x_j=\MultiMapping{j}(x_j)=\Avg(x_1,\ldots, x_\Number)=\frac{1}{\Number}\sum_{k=1}^\Number x_k.
\end{align*}
By rearranging, we obtain the conclusion of the proposition.
\end{proof}
\begin{cor}\label{cor: average characterization}
Let $\MMCoupling$, $\DominatingDens[j]$, and $\PairedMultiDens{j}$ be as above. Also fix an integer $1\leq K<\Number$, and some sub-collection of indices $\Indices:=\{j_1,\ldots, j_{K}\}\subset\{1,\ldots, \Number\}$. Then for $\Paren{\R^n}^K$-Lebesgue a.e. $(x_{j_1},\ldots, x_{j_K})\in \prod_{j\in\Indices}\{\PairedMultiDens{j}<\DominatingDens[j]\}$ which can be completed to some $(x_1,\ldots, x_\Number)\in\spt{\MMCoupling}$, the following hold:
\begin{align}
x_{j_1}&=\ldots=x_{j_K}\label{eqn: all the same},\\
x_{j_1}&=\frac{1}{\Number-K}\sum_{k\not\in\Indices}x_k\label{eqn: averages}.
\end{align}
\end{cor}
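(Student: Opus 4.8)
The plan is to iterate Proposition~\ref{prop: massfilling}, which is exactly the case $K=1$, and to use the fact that the support of $\MMCoupling$ is, up to a null set, the graph of a function over any single marginal (by~\cite[Theorem 2.1, Corollary 2.2]{GS98}, as invoked in the proof of Proposition~\ref{prop: massfilling}). Fix the index set $\Indices = \{j_1,\ldots,j_K\}$. For each $\ell \in \{1,\ldots,K\}$, Proposition~\ref{prop: massfilling} applied to the marginal $j_\ell$ tells us that for a.e.\ $x_{j_\ell} \in \{\PairedMultiDens{j_\ell}<\DominatingDens[j_\ell]\}$, if $(x_1,\ldots,x_\Number)\in\spt\MMCoupling$ is the (a.e.\ unique) completion, then
\begin{align*}
x_{j_\ell} = \frac{1}{\Number-1}\sum_{k\neq j_\ell} x_k.
\end{align*}
The first step is to make these statements simultaneously valid: since the graphical structure gives an a.e.-defined measurable map $x_{j_1}\mapsto (x_1,\ldots,x_\Number)$, pulling back the conull set from marginal $j_\ell$ (where the Proposition~\ref{prop: massfilling} conclusion holds) along this map yields, for each $\ell$, a conull subset of $\spt\MMCoupling$; intersecting over the finitely many $\ell$, we get that for a.e.\ $(x_1,\ldots,x_\Number)\in\spt\MMCoupling$ whose coordinates $x_{j_\ell}$ all lie in the respective sets $\{\PairedMultiDens{j_\ell}<\DominatingDens[j_\ell]\}$, all $K$ identities above hold at once. (One should check that this conull set, viewed through the projection onto the $\Indices$-coordinates, carries $\Paren{\R^n}^K$-a.e.\ point of $\prod_{j\in\Indices}\{\PairedMultiDens{j}<\DominatingDens[j]\}$ that admits a completion; this follows because $\Paren{\Proj{\Indices}}_\#\MMCoupling$ is absolutely continuous, being dominated by the product-type bound coming from absolute continuity of the $\Dominating[j]$, so null sets in the target pull back to $\MMCoupling$-null sets.)

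The second step is the linear algebra. Writing $S := \sum_{k=1}^\Number x_k$, each identity $x_{j_\ell} = \frac{1}{\Number-1}(S - x_{j_\ell})$ rearranges to $\Number\, x_{j_\ell} = S$, i.e.\ $x_{j_\ell} = S/\Number$ for every $\ell$. Hence $x_{j_1} = x_{j_2} = \cdots = x_{j_K}$, which is~\eqref{eqn: all the same}. For~\eqref{eqn: averages}, note $S = \sum_{\ell=1}^K x_{j_\ell} + \sum_{k\notin\Indices} x_k = K\,x_{j_1} + \sum_{k\notin\Indices}x_k$, while also $S = \Number\, x_{j_1}$; subtracting gives $(\Number-K)\,x_{j_1} = \sum_{k\notin\Indices} x_k$, which is~\eqref{eqn: averages} since $K<\Number$.

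The only genuine obstacle is the measure-theoretic bookkeeping in the first step: one must be careful that "a.e." in Proposition~\ref{prop: massfilling} (which is with respect to $\R^n$-Lebesgue measure on the $j_\ell$-th marginal, equivalently with respect to $\PairedMulti{j_\ell}$ since $\PairedMulti{j_\ell}$ is absolutely continuous and the relevant set is inside its support), transfers to an a.e.\ statement on $\spt\MMCoupling$ and then to a $\Paren{\R^n}^K$-a.e.\ statement on the product set in the corollary. The graphical structure of $\MMCoupling$ over each single marginal, together with absolute continuity of all marginals of $\MMCoupling$ (hence of its projection onto any subset of coordinates), is exactly what makes these transfers legitimate; once this is set up, the remainder is the elementary rearrangement above.
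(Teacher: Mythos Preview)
Your overall strategy---iterate Proposition~\ref{prop: massfilling} and then do the linear algebra---matches the paper's, and your algebraic derivation of \eqref{eqn: all the same} and \eqref{eqn: averages} via $\Number\, x_{j_\ell} = S$ is correct (and arguably tidier than the paper's pairwise subtraction). However, there is a genuine error in your measure-theoretic bookkeeping: the claim that $\Paren{\Proj{\Indices}}_\#\MMCoupling$ is absolutely continuous with respect to $\Paren{\R^n}^K$-Lebesgue measure is \emph{false} for $K\geq 2$. By \cite{GS98} the measure $\MMCoupling$ is concentrated on an $n$-dimensional graph in $\Paren{\R^n}^\Number$, so its projection onto any $K\geq 2$ factors is concentrated on an $n$-dimensional set inside $\Paren{\R^n}^K$ and is therefore singular with respect to Lebesgue. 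Domination of the one-dimensional marginals by the absolutely continuous $\Dominating[j]$ yields no ``product-type bound'' on the joint projection; and even were absolute continuity true, it would let you pass from Lebesgue-null to $\MMCoupling$-null, which is the wrong direction for what you need here.

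The fix is simpler than your route through $\spt\MMCoupling$ and back: for each $\ell$, let $E_\ell\subset\{\PairedMultiDens{j_\ell}<\DominatingDens[j_\ell]\}$ be the full-Lebesgue-measure subset on which the conclusion of Proposition~\ref{prop: massfilling} holds. Then $\prod_\ell E_\ell$ has full $\Paren{\R^n}^K$-Lebesgue measure in $\prod_{j\in\Indices}\{\PairedMultiDens{j}<\DominatingDens[j]\}$ by Fubini, and for any $(x_{j_1},\ldots,x_{j_K})$ in this product that admits a completion $(x_1,\ldots,x_\Number)\in\spt\MMCoupling$, each coordinate lies in its $E_\ell$, so Proposition~\ref{prop: massfilling} applies componentwise and your algebra finishes. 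This is exactly how the paper handles the ``a.e.'' bookkeeping.
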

\begin{proof}
The case $K=1$ and $\Number=2$ is exactly~\cite[Theorem 2.6]{Fig10a}, so let us assume that $\Number>2$. Clearly there is a $\Paren{\R^n}^K$-Lebesgue full measure subset of $\prod_{j\in\Indices}\{\PairedMultiDens{j}<\DominatingDens[j]\}$ on which Proposition~\ref{prop: massfilling} applies to every component; fix one such point $(x_{j_1},\ldots, x_{j_K})$ in that set and complete it to some $(x_1,\ldots, x_\Number)\in\spt{\MMCoupling}$. Then we can see, for example,  
\begin{align*}
x_{j_1}&=\frac{1}{\Number-1}\sum_{k\neq j_1}x_k,\\
x_{j_2}&=\frac{1}{\Number-1}\sum_{k\neq j_2}x_k,
\end{align*}
and by subtracting and rearranging (since $\Number>2$) we find that $x_{j_1}=x_{j_2}$. Proceeding as such for other indices in $\Indices$, we immediately obtain the claim~\eqref{eqn: all the same}.

Now, by another application of Proposition~\ref{prop: massfilling},
\begin{align*}
x_{j_1}&=\frac{1}{\Number-1}\sum_{k\neq j_1}x_k\\
\iff(\Number-1)x_{j_1}&=\Paren{(K-1)x_{j_1}+\sum_{k\not\in \Indices}x_k}\\
\iff (\Number-K)x_{j_1}&=\sum_{k\not\in \Indices}x_k,
\end{align*}
and we obtain~\eqref{eqn: averages}.
\end{proof}
In particular, if $K=\Number-1$ above, we recover an appropriate analogue of~\cite[Theorem 2.6]{Fig10a} in the multi-marginal case: if $\Number-1$ of the active submeasures do not saturate the original measures at an $\Number$-tuple in the optimal coupling, then all of the coupled points must be the same (up to a set of measure zero).
\begin{rmk}[Semiconcavity of the free boundary]\label{rmk: semiconcavity of the free boundary}
Lastly we remark that under certain conditions, we can obtain the semiconcavity of the free boundary ``for free'' simply by applying the theory of the two marginal case. Assume that each support $\spt{\Dominating[j]}$, $1\leq j\leq \Number$ is separated by a hyperplane from their Minkowski average. Note the support of any $\Bary$ that minimizes~\eqref{eqn: BCm} is contained in this Minkowski average by \cite[Proposition 4.2]{AC11}. Then, the marginal of any optimizer in~\eqref{eqn: MMm} can be thought of as the left marginal of~\eqref{eqn: OTm} with right marginal $\Bary$, hence we may apply~\cite[Proposition 5.2]{CM10} to conclude that the ``free boundary'' (as defined in~\cite{CM10}) in $\spt{\Dominating[j]}$ enjoys the same semiconcavity. However, since one cannot make any assumptions about convexity of $\spt{\Bary}$ and bounds on the density of $\Bary$, arguments based on Caffarelli's regularity theory (see~\cite{Caf92}) to obtain higher regularity of the free boundary cannot be applied.
\end{rmk}
\section{Extension to more general cost functions}\label{sec: general costs}
Here we mention that our main result can be extended to a more general class of cost functions.  Consider a cost function of the form 

\begin{equation}\label{hp}
c(x_1,\ldots, x_\Number)= \inf_{y\in Y}\sum_{j=1}^Nc_j(x_j,y),
\end{equation}
where $c_j:\Omega_j\times Y\to \R$ for some fixed, open domains $\Omega_j$ and $Y$. 
Also consider the generalized partial barycenter problem:

\begin{equation}\label{generalbc}
\min_{\Bary\in\Constraint}\sum_{j=1}^\Number\mathcal{T}_{c_j, m}(\Dominating[j],\Bary)
\end{equation}

where $\mathcal{T}_{c_j, m}$ is the partial transport distance with cost $c_j$:
\begin{equation*}
\mathcal{T}_{c_j, m}(\Dominating[j],\Bary):=\min_{\TwoCoupling'\in\PartialCouples{\Dominating[j]}{\Bary},\  \mass{\TwoCoupling'}=m}\int_{\R^n\times\R^n} c_j(x_j,y)\TwoCoupling'(dx, dy).
\end{equation*}

In order to obtain the equivalent of Proposition~\ref{prop: BCm=MMm}, we will require the following assumptions:

\begin{description}
\item[{[H1]}] For all $j$, the costs $c_j$ are $C^2$ and $\det{D^2_{x_jy}c_j}\neq 0$ on $\Omega_j\times Y$.
\item[{[H2]}] For all $j$, the mappings $D_{x_j}c_j(x_j, \cdot)$ are injective for each $x_j$, and $D_{y}c_j(\cdot, y)$ are injective for each $y$
\item[{[H3]}] For each $(x_1,x_2,\ldots, x_\Number)$ the infimum in \eqref{hp} is attained by a \textit{unique} $y=\overline{y}(x_1,x_2,\ldots, x_\Number) \in Y$.
\item[{[H4]}] The matrix $B(x_1,x_2,\ldots, x_\Number):=\sum_{i=1}^\Number D^2_{yy}c_j(x_j,\overline{y}(x_1,x_2,\ldots ,x_\Number))$ is non-singular.
\end{description}
Additionally, to obtain the equivalent of Theorem~\ref{thm: partial barycenter is unique}, we must also assume the following condition:
\begin{description}
\item[{[H5]}] For all $j$, we have $c_j(x_j, y)\geq 0$ with $c_j(x_j, y)=0$ if and only if $x_j=y$.
\end{description}
Under these assumptions, we can generalize the main results to a more general class of cost functions.
\begin{thm}\label{theorem: general costs}
 Fix compactly supported, absolutely continuous measures $\Dominating[j]$, for $j=1,\ldots,\Number$, let  $0\leq m\leq \min_{1\leq j\leq \Number}{\{\mass{\Dominating[j]}\}}$, and assume that the cost functions $c_j$ for $1\leq j\leq \Number$ satisfy conditions {\bf[H1]-[H5]} above. Then the following hold:
 \begin{enumerate}
  \item If $\MMCoupling$ is a solution to~\eqref{eqn: MMm} with cost \eqref{hp}, then $\nu =\bar y_{\#}\MMCoupling$ is a solution to \eqref{generalbc}.  On the other hand, if $\nu$ is a solution to  \eqref{generalbc}, then $\MMCoupling=\Paren{\MultiDualMapping[\nu]{1},\ldots,\MultiDualMapping[\nu]{\Number}}_{\#}\nu$ is a solution to ~\eqref{eqn: MMm}, where $\RevMultiMappingCoupling[\nu]{j}_\#\nu$ is the (unique) minimizer in $\mathcal{T}_{c_j, m}\Paren{\Dominating[j], \Bary}$.
    \item Assume in addition that $\m\geq \mass{\DominatingCommon}$ where $\DominatingCommon$ is the measure with density $\min_{1\leq j\leq \Number}{\DominatingDens[j]}$. Then both the multi-marginal optimal partial transport problem~\eqref{eqn: MMm} with cost function given by \eqref{hp}, and the generalized partial barycenter problem \eqref{generalbc} admit unique solutions.  
  \end{enumerate}
\end{thm}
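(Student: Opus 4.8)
The plan is to mirror the two-step development of the quadratic case. Step one is the analogue of Proposition~\ref{prop: BCm=MMm}: the equivalence of \eqref{eqn: MMm} with cost \eqref{hp} and the generalized partial barycenter problem \eqref{generalbc}, together with the equivalence of the two uniqueness statements. Step two is the analogue of Theorem~\ref{thm: partial barycenter is unique}: uniqueness for \eqref{generalbc} under the extra hypothesis $\m\geq\mass{\DominatingCommon}$, from which uniqueness for \eqref{eqn: MMm} follows via step one. Throughout, the strategy is to reuse the existing arguments essentially verbatim, indicating where each of \textbf{[H1]}--\textbf{[H5]} is invoked.

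For step one, the role played in the quadratic case by the identity $\sum_{j\neq k}\norm{x_j-x_k}^2=2\Number\min_{y}\sum_j\norm{x_j-y}^2$ with unique minimizer $\Avg(x_1,\ldots,x_\Number)$ is now played directly by the definition \eqref{hp} together with \textbf{[H3]} (existence and uniqueness of $\bar y$). Hypotheses \textbf{[H1]}, \textbf{[H2]} are the twist/non-degeneracy conditions guaranteeing that each two-marginal partial transport problem with cost $c_j$ has a unique minimizer concentrated on the graph of a map $\MultiDualMapping[\nu]{j}$ (the general-cost partial-transport theory of \cite{Fig10a}), while \textbf{[H1]}--\textbf{[H4]} are exactly the assumptions under which the multi-marginal problem with infimal-convolution cost \eqref{hp} has a unique solution concentrated on a graph over any single marginal (the results of \cite{KP13, P11}). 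With these in hand, the three-bullet argument of Carlier--Ekeland reproduced in the proof of Proposition~\ref{prop: BCm=MMm} goes through after replacing $\Avg$ by $\bar y$ and $\norm{x_j-y}^2$ by $c_j(x_j,y)$: one verifies that $\Paren{\MultiDualMapping[\nu]{1},\ldots,\MultiDualMapping[\nu]{\Number}}_\#\nu$ is admissible in \eqref{eqn: MMm} with cost at most $\sum_j\mathcal{T}_{c_j,m}(\Dominating[j],\nu)$, that equality propagates when $\nu=\bar y_\#\MMCoupling$, and hence that the two minimal values coincide; the uniqueness-iff-uniqueness assertion then follows as before from uniqueness for the \eqref{eqn: MM}- and \eqref{eqn: BC}-type problems. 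The one genuinely new ingredient is the analogue of Lemma~\ref{lem: absolute continuity of BCm minimizers}: a minimizer $\nu$ of \eqref{generalbc} is optimal in the classical $c_j$-barycenter problem for its active submeasures, so its absolute continuity follows from the general-cost version of \cite[Theorem 3.3]{P13a} (again using \textbf{[H1]}--\textbf{[H2]}).

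For step two, we re-run the proof of Theorem~\ref{thm: partial barycenter is unique}. The convexity of $\nu\mapsto\mathcal{T}_{c_j,m}(\Dominating[j],\nu)$ along linear interpolants, and the rigidity statement of Lemma~\ref{lem: convexity claims}, hold with the same proofs: convexity uses only linearity of $\Coupling\mapsto\int c_j\,d\Coupling$; the matching $\DualMapping[0]=\DualMapping[1]$ on $\{\BaryCommonDens>0\}$ uses only that optimal $c_j$-plans are graphs (from \textbf{[H1]}--\textbf{[H2]}); and the conclusion $\DualMapping[i](y)=y$ on $\{\BaryCommonDens<\BaryDens[i]\}$ is now the general-cost analogue of \cite[Theorem 2.6]{Fig10a}, which is precisely where \textbf{[H5]} enters --- the normalization $c_j\geq 0$, $c_j(x_j,y)=0\iff x_j=y$ forces the optimal plan to be supported on the diagonal on the part of a partial-transport marginal that does not saturate $\Dominating[j]$. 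Granting these, the remainder of the argument --- the derivation of \eqref{eqn: barycenter dominated by pointwise minimum on stay set}, the construction of the sets $\MinSmaller$, $\MinBigger\subset\StaySet$, the competitor $\BaryBar\in\Constraint$, and Claims~1--3 --- uses nothing about the cost beyond a.e. injectivity of $\MultiDualMapping[i]{j}$ on $\spt\Bary[i]$ (from \textbf{[H1]}--\textbf{[H2]}, via the Gangbo--McCann theorem \cite{Gm96}) and the fact that the identity contributes zero cost while any map moving a positive amount of mass contributes strictly positive cost (again \textbf{[H5]}). Thus Claims~1, 2, 3 and the final contradiction carry over, giving uniqueness in \eqref{generalbc}, hence in \eqref{eqn: MMm}.

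The main obstacle is not conceptual but bookkeeping: verifying that the black boxes cited above hold under exactly \textbf{[H1]}--\textbf{[H5]}. Specifically, one must confirm that the partial-transport results of \cite{Fig10a} (existence, uniqueness, graphical structure, and the ``stays on the diagonal where the marginal is not saturated'' theorem) extend to costs satisfying \textbf{[H1]}, \textbf{[H2]}, \textbf{[H5]}; that the multi-marginal problem with a cost of the infimal-convolution form \eqref{hp} has a unique, graph-concentrated solution under \textbf{[H1]}--\textbf{[H4]}; and that absolute continuity of $c_j$-barycenters holds under \textbf{[H1]}--\textbf{[H2]}. These are all available in the literature under hypotheses of this type, but one also needs to check that the compact-support assumption on the $\Dominating[j]$ can cleanly replace the finite-second-moment conventions used throughout the quadratic treatment. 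Once these statements are assembled, the arguments are formally identical to the quadratic case.
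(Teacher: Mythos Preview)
Your proposal is correct and follows essentially the same approach as the paper: the paper's own ``proof'' is a brief sketch noting that the argument is a straightforward adaptation of the quadratic case, with \textbf{[H1]}--\textbf{[H4]} supplying the Carlier--Ekeland equivalence, multi-marginal uniqueness, and absolute continuity of barycenters (via \cite{CE10, P13a}), and \textbf{[H5]} (together with \textbf{[H1]}--\textbf{[H2]}) enabling the use of Figalli's two-marginal results \cite[Remark 2.11]{Fig10a}. Your write-up is in fact more detailed than the paper's in tracking exactly where each hypothesis enters.
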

The proof of the preceding theorem is a straightforward adaptation of the proof of our main results.  The necessary ingredients are a relationship between the partial transport problem and the partial barycenter problem, established by Carlier and Ekeland \cite[Proposition 3]{CE10}.  The conditions {\bf [H1] - [H4]} guarantee the uniqueness of the solution to the standard multi-marginal problem with cost \eqref{hp}, and absolute continuity of the standard generalized barycenter \cite{P13a}, necessary ingredients in our argument here.

On the other hand, condition {\bf [H5]} (together with {\bf [H1]} and {\bf [H2]}) are necessary to invoke the results of Figalli in the two marginal case~\eqref{eqn: OTm} that we have relied on to prove the results of Section~\ref{sec: uniqueness in BCm} (see~\cite[Remark 2.11]{Fig10a} for details).
\begin{appendices}

\section{Summary of notation}\label{sect: notation}
We provide here a brief summary of the main notation used in the proof of Theorem \ref{thm: partial barycenter is unique}.  We generally associate $i\in \{0, 1\}$, and superscripts to objects related to the barycenter measures, while we associate $j\in \{1,\ldots, N\}$ and subscripts to objects related to the marginal measures.
\begin{itemize}
\item $\Dominating[j]$, for $j=1,...,N$, is a marginal in the original problem.  Its density is denoted by $\DominatingDens[j]$.
\item $\Bary[i]$, for $i=0,1$, is a measure of total mass $m$.  It is an argument (usually a minimizer) of the functional $F$.  Its density is denoted by $\BaryDens[i]$.
\item $\Bary[t]:=t\Bary[1] +(1-t)\Bary[0]$, for $t \in [0,1]$, is a convex interpolant of $\Bary[0]$ and $\Bary[1]$.  Its density is denoted by $\BaryDens[t]$.
\item $\PairedMulti[i]{j}$, for $i=0,1$ and $j=1,....,N$, are the active submeasures of the $\Dominating[j]$, paired with $\Bary[i]$ via the (two marginal) optimal partial transport plan $\RevMultiMappingCoupling[i]{j}_\#\Bary[i]$  (thus $\PairedMulti[i]{j}=\Paren{\MultiDualMapping[i]{j}}_\#\Bary[i]\leq \Dominating[j]$).   Its density is denoted by $\PairedMultiDens[i]{j}$.  Note that the $\leq$ symbol is meant to indicate that $\PairedMulti[i]{j}$ is a submeasure of $\Dominating[j]$.
\item $\DominatingCommon$ is the common mass of the marginals $\Dominating[j]$, for $j=1,...N$; that is, the measure with density $\DominatingCommonDens = \min_{j=1,...N}\DominatingDens[j]$.
\item $\BaryCommon$ is the common mass of  $\Bary[0]$ and $\Bary[1]$; that is, the measure with density $\BaryCommonDens = \min_{i=0,1}\BaryDens[i]$.
\item $\MultiDualMapping[i]{j}$ is the optimal partial transport mapping from $\Bary[i]$ to $\Dominating[j]$; this mapping pushes $\Bary[i]$  forward to $\PairedMulti[i]{j}$.
\item $ \StaySet:=\{x\in\R^n\mid \BaryDens[0](x)\neq\BaryDens[1](x)\}$ is the set where the densities of $\Bary[0]$ and $\Bary[1]$ differ.
\item $\MinSmaller$ and $\MinBigger$ (for one of $i=0$ or $1$) are certain sets where the common density $\DominatingCommonDens$ is, respectively, smaller and larger that the density $\BaryDens[i]$ (note that these sets are chosen to also have additional special properties, see Claim 1 in the proof of Theorem~\ref{thm: partial barycenter is unique}).

\end{itemize}
\section{Proof of claims used in the proof of Theorem~\ref{thm: partial barycenter is unique}}\label{sect: proof of claims}
Here we prove the claims used in the proof of the main theorem.
\begin{proof}[{Proof of Claim 1}]
Note that as we choose  $\MinBigger\subset\StaySet$, we can write $\R^n\setminus\MinBigger$ as the disjoint union of the sets $\Curly{\BaryDens[i]=0}\setminus\MinBigger$, $\Curly{\BaryDens[i]>0}\cap\Paren{\StaySet\setminus\MinBigger}$, and $\Curly{\BaryDens[i]>0}\setminus\StaySet$. Using again the fact that $\MinBigger\subset\StaySet$, by~\eqref{eqn: identity map} we have
\begin{align*}
&\Leb{\MultiDualMapping[i]{j}\Paren{\Curly{\BaryDens[i]=0}\setminus\MinBigger}\cap\MultiDualMapping[i]{j}(\Paren{\MinBigger}}\\
&=\Leb{\Paren{\Curly{\BaryDens[i]=0}\setminus\MinBigger}\cap\MinBigger}=0
\end{align*}
and likewise
\begin{align*}
\Leb{\MultiDualMapping[i]{j}\Paren{\Curly{\BaryDens[i]>0}\cap\Paren{\StaySet\setminus\MinBigger}}\cap \MultiDualMapping[i]{j}\Paren{\MinBigger}}=0.
\end{align*}
Thus to guarantee~\eqref{eqn: doesn't get mass from elsewhere} it would be sufficient to show we can choose $\MinBigger$ such that
\begin{align}\label{eqn: show this instead}
  \Leb{\MultiDualMapping[i]{j}(\Curly{\BaryDens[i]>0}\setminus\StaySet)\cap\MultiDualMapping[i]{j}(\MinBigger)}=0.
\end{align}
Now, by~\eqref{eqn: barycenter dominated by pointwise minimum on stay set} and since $\Bary[0]\neq\Bary[1]$, there must exist a positive measure subset of $\StaySet$ on which $\DominatingCommonDens>\BaryDens[i]$ for either of $i=0$ or $1$. By the definition of $\StaySet$, we can take $\MinBigger$ to be a subset of this aforementioned set, in such a way that $\BaryDens[i']>0$ on $\MinBigger$, for at least one of $i'=0$ or $1$ ($i'$ may or may not be equal to $i$).  Indeed, for any $x\in \StaySet$ either $\BaryDens[0](x)>0$ or $\BaryDens[1](x)>\BaryDens[0](x)=0$. Since $\Leb{\StaySet}>0$, we can choose $i'$ appropriately.

At this point, by shrinking $\MinSmaller$ as necessary, we can ensure~\eqref{eqn: bigger smaller sets} holds. Now, since $\{\BaryDens[i]>0\}\setminus\StaySet=\{\BaryDens[i']>0\}\setminus\StaySet$ by the  definition of $\StaySet$, and using the fact that $\MultiDualMapping[i]{j}=\MultiDualMapping[i']{j}$ a.e.,  we see
\begin{align*}
 &\Leb{\MultiDualMapping[i]{j}(\{\BaryDens[i]>0\}\setminus\StaySet)\cap\MultiDualMapping[i]{j}(\MinBigger)}\\
 &=\Leb{\MultiDualMapping[i']{j}(\{\BaryDens[i']>0\}\setminus\StaySet)\cap\MultiDualMapping[i']{j}(\MinBigger)}.
\end{align*}
Finally, as $\MultiDualMapping[i']{j}$ is injective a.e. on $\{\BaryDens[i']>0\}$, and both $\{\BaryDens[i']>0\}\setminus\StaySet$ and  $\MinBigger$  are  disjoint subsets of $\{\BaryDens[i']>0\}$
we obtain  
\begin{align*}
\Leb{\MultiDualMapping[i']{j}(\{\BaryDens[i']>0\}\setminus\StaySet)\cap\MultiDualMapping[i']{j}(\MinBigger)}=0,
\end{align*}
 which implies ~\eqref{eqn: show this instead}.
\end{proof}
\begin{proof}[{Proof of Claim 2}]
Fix an arbitrary measurable set $E$, then for any $1\leq j\leq N$
\begin{align*}
 \Paren{\MultiDualMapping[i]{j}}_\#{\BaryBar}(E)&=\BaryBar(\MultiDualMappingInv[i]{j}(E))  \\
&=\Bary[i](\MultiDualMappingInv[i]{j}(E))+\int_{\MinBigger\cap\MultiDualMappingInv[i]{j}(E)}(\DominatingCommonDens-\BaryDens[i])dx-\int_{\MinSmaller\cap\MultiDualMappingInv[i]{j}(E)}(\BaryDens[i]-\DominatingCommonDens)dx\notag\\
 &\leq \PairedMulti[i]{j}(E)+\Dominating[j](\MinBigger\cap \MultiDualMappingInv[i]{j}(E))-\Bary[i](\MinBigger\cap \MultiDualMappingInv[i]{j}(E))
 \end{align*}
where, in the last line, we have used that  $(\MultiDualMapping[i]{j})_\#{\Bary} = \PairedMulti[i]{j}$, that $\DominatingCommonDens \leq \DominatingDens[j]$ for all $j$, and that the last term in the second line is nonpositive.

 Now since $\MinBigger\subset\StaySet$, by~\eqref{eqn: identity map} we see that (up to null sets) we have
\begin{align}\label{eqn: inclusion up to null}
 \MinBigger\cap E= \MinBigger\cap \MultiDualMappingInv[i]{j}(E) 
\end{align}
In addition we find (again up to null sets),
\begin{align*}
 \MultiDualMappingInv[i]{j}(\MinBigger\cap E)&=\Paren{\MultiDualMappingInv[i]{j}(\MinBigger\cap E)\cap \MinBigger}\cup \Paren{\MultiDualMappingInv[i]{j}(\MinBigger\cap E)\setminus \MinBigger}\\
 &\subset\Paren{\MinBigger\cap E}\cup \Paren{\MultiDualMappingInv[i]{j}(\MinBigger)\setminus \MinBigger}.
\end{align*}
 Then by \eqref{eqn: doesn't get mass from elsewhere} and the absolute continuity of $\PairedMulti[i]{j}$ we find that
\begin{align*}
 0&=\PairedMulti[i]{j}\Paren{\MultiDualMapping[i]{j}(\R^n\setminus\MinBigger)\cap\MinBigger}\\
 &=\Bary[i]\Paren{\MultiDualMappingInv[i]{j}\Paren{\MultiDualMapping[i]{j}(\R^n\setminus\MinBigger)\cap\MinBigger}}\\
 &\geq \Bary[i]\Paren{(\R^n\setminus\MinBigger)\cap\MultiDualMappingInv[i]{j}(\MinBigger)},
\end{align*}
hence
\begin{align*}
 \Bary[i]\Paren{\MultiDualMappingInv[i]{j}(\MinBigger\cap E)}\leq \Bary[i]\Paren{\MinBigger\cap E}.
\end{align*}


Then by combining with \eqref{eqn: inclusion up to null} we can calculate
\begin{align*}
 &\PairedMulti[i]{j}(E)+\Dominating[j](\MinBigger\cap \MultiDualMappingInv[i]{j}(E))-\Bary[i](\MinBigger\cap \MultiDualMappingInv[i]{j}(E))\\
 &\leq \PairedMulti[i]{j}(E)+\Dominating[j](\MinBigger\cap E)-\Bary[i](\MultiDualMappingInv[i]{j}(\MinBigger\cap E))\\
 &=\PairedMulti[i]{j}(E)+\Dominating[j](\MinBigger\cap E)-\PairedMulti[i]{j}(\MinBigger\cap E)\\
 &=\PairedMulti[i]{j}((  \mathbb{R}^n \setminus \MinBigger)\cap E)+\Dominating[j](\MinBigger\cap E)\\
 &\leq \Dominating[j](E),
\end{align*}
 proving the claim.
\end{proof}

\begin{proof}[{Proof of Claim 3}]
First we calculate for each $1\leq j\leq \Number$,
\begin{align}
 \Cost[2]{\RevMultiMappingCoupling[i]{j}_\#\BaryBar}&=\Cost[2]{\RevMultiMappingCoupling[i]{j}_\#\Bary[i]}+\int_{\MinBigger}\norm{\MultiDualMapping[i]{j}(x)-x}^2(\DominatingCommonDens-\BaryDens[i])dx\notag\\
 &\qquad-\int_{\MinSmaller}\norm{\MultiDualMapping[i]{j}(x)-x}^2(\BaryDens[i]-\DominatingCommonDens)dx\notag\\
 &=\PartialWassSq{\Dominating[j]}{\Bary[i]}-\int_{\MinSmaller}\norm{\MultiDualMapping[i]{j}(x)-x}^2(\BaryDens[i]-\DominatingCommonDens)dx,\label{eqn: modified total cost}
\end{align}
where we have again used~\eqref{eqn: identity map} and that $\MinBigger\subset\StaySet$. Now by definition of $\MinSmaller$, we must have $-\int_{\MinSmaller}\norm{\MultiDualMapping[i]{j}(x)-x}^2(\BaryDens[i]-\DominatingCommonDens)dx\leq 0$ for each $j$, and so  ~\eqref{eqn: modified total cost} implies
\begin{equation}\label{eqn: modified total cost 2}
 \Cost[2]{\RevMultiMappingCoupling[i]{j}_\#\BaryBar} \leq\PartialWassSq{\Dominating[j]}{\Bary[i]}.
\end{equation}
 Suppose that there is equality for every $1\leq j\leq \Number$; that is, that 
$$
-\int_{\MinSmaller}\norm{\MultiDualMapping[i]{j}(x)-x}^2(\BaryDens[i]-\DominatingCommonDens)dx= 0
$$
for each $j$.  This would imply that $\MultiDualMapping[i]{j}$ is the identity map a.e. on $\MinSmaller$ for every $j$ as well. However, there exists some set $\mathcal{A}_{j'}\subset\MinSmaller$ with strictly positive measure on which $\DominatingCommonDens\equiv\DominatingDens[j']$ for some index $1\leq j'\leq \Number$. This would imply that (using the a.e. injectivity of $\MultiDualMapping[i]{j'}$ on $\spt{\Bary[i]}$)
\begin{align*}
 \PairedMulti[i]{j'}(\mathcal{A}_{j'})&=\Bary[i](\MultiDualMappingInv[i]{j'}(\mathcal{A}_{j'}))\\
 &=\int_{\mathcal{A}_{j'}}\BaryDens[i]dx\\
 &>\int_{\mathcal{A}_{j'}}\DominatingCommonDens dx\\
 &=\Dominating[j'](\mathcal{A}_{j'}),
\end{align*} 
contradicting that $\PairedMulti[i]{j'}\leq \Dominating[j']$. Thus, we must have \emph{strict} inequality in ~\eqref{eqn: modified total cost 2} for at least one $j$, and so by  summing~\eqref{eqn: modified total cost 2} over $1\leq j\leq\Number$, we 
obtain~\eqref{eqn: new map too cheap}. 
\end{proof}

\end{appendices}
\bibliography{partialmultibiblio}
\bibliographystyle{plain}
\end{document}